\newtheorem{theorem}{Theorem}[section]
\newtheorem{definition}[theorem]{Definition}
\newtheorem{lemma}[theorem]{Lemma}
\newtheorem{corollary}[theorem]{Corollary}
\newtheorem{proposition}[theorem]{Proposition}
\theoremstyle{definition}
\newtheorem{remark}[theorem]{Remark}
\newcommand{\N}{\mathbb{N}}
\begin{document}
\title[Doubling constant in metric measure spaces]{The least doubling constant of a metric measure space}

\author{Javier Soria}
\address{Department of Mathematics and Computer Science, University of Barcelona, Gran Via 585, E-08007 Barcelona, Spain.}
\email{soria@ub.edu}

\author{Pedro Tradacete}
\address{Instituto de Ciencias Matem\'aticas (CSIC-UAM-UC3M-UCM)\\
Consejo Superior de Investigaciones Cient\'ificas\\
C/ Nicol\'as Cabrera, 13--15, Campus de Cantoblanco UAM\\
28049 Madrid, Spain.}
\email{pedro.tradacete@icmat.es}

\thanks{The first author has  been partially supported by the Spanish Government grant MTM2016-75196-P (MINECO/FEDER, UE) and the Catalan Autonomous Government grant 2017SGR358.
The second author gratefully acknowledges support of Spanish Ministerio de Econom\'{\i}a, Industria y Competitividad through grants MTM2016-76808-P, MTM2016-75196-P and the ``Severo Ochoa Programme for Centres of Excellence in R\&D'' (SEV-2015-0554). The second author wishes to thank Beno\^it Kloeckner (Paris-Est) and Estibalitz Durand-Cartagena (UNED) for interesting discussions on the topic of this paper.}

\subjclass[2010]{54E35, 28C15}
\keywords{Metric spaces; doubling measures.}

\begin{abstract}
We study the least doubling constant $C_{(X,d)}$, among all doubling measures $\mu$ supported on a metric space $(X,d)$. In particular, we prove that for every metric space with more than one point,  $C_{(X,d)}\ge 2$. We also describe some further properties of $C_{(X,d)}$ and compute its value for several important examples.
\end{abstract}

\maketitle

\thispagestyle{empty}

\section{Introduction and motivation}
Given a metric space $(X,d)$, a Borel regular measure $\mu$ on $X$ is called doubling if there exists a constant $C\geq1$ such that, for every $x\in X$ and $r>0$,
\begin{equation}\label{doublingC}
\mu(B(x,2r))\leq C\mu(B(x,r)),
\end{equation}
where $B(x,r)=\{y\in X: d(x,y)<r\}$. If this is the case, the metric measure space $(X,d,\mu)$ will be called a space of homogeneous type (cf. \cite{CW}). Given such $(X,d,\mu)$, we will denote by $C_\mu$ the best possible constant appearing in \eqref{doublingC}; that is,
\begin{equation*}\label{bestconstant}
C_\mu=\sup_{x\in X, r>0}\frac{\mu(B(x,2r))}{\mu(B(x,r))}.
\end{equation*}

For convenience, let us introduce the following definition.

\begin{definition}\label{optconstxd}
Given a metric space $(X,d)$, we define the least doubling constant as
$$
C_{(X,d)}=\inf\big\{C_\mu:\mu \textrm{ doubling measure on }(X,d)\big\}.
$$
\end{definition}

If no doubling measure exists in $(X,d)$, we will write that $C_{(X,d)}=\infty$. We would like to note that all references we have found in the literature, place the constant $C_{(X,d)}$ in the interval $[1,\infty)$. One can easily check, unless the metric space reduces to a singleton, that $C_{(X,d)}>1$. An elementary argument shows that the lower bound $C_{(X,d)}\geq\varphi=\frac{1+\sqrt{5}}{2}$ always holds (Proposition~\ref{goldenratio}). However, with some more work it will be shown that, in fact, this estimate can be improved to $C_{(X,d)}\geq2$ (Theorem~\ref{Cgeq2}). 

In general, it is not true that on every metric space $(X,d)$ one can always find such a doubling measure (e.g., $X=\mathbb Q$ with the standard euclidean distance; see also \cite{S}). However, if a metric space $(X,d)$ supports a non-trivial doubling measure, then there exists $K\in\mathbb N$ such that, for every $x\in X$ and $r>0$, the number of $r$-separated points in $B(x,2r)$ is bounded by $K$, where two points $x,y\in X$ are $r$-separated provided $d(x,y)\geq r$ (cf. \cite{CW} and Proposition \ref{p:lowerboundsr} below). If this property holds, then we say that $(X,d)$ is a doubling space (hence, a space of homogeneous type is doubling). Conversely, for compact \cite{VK}, or more generally, complete metric spaces \cite{LS}, being doubling (in the metric sense) implies the existence of a doubling measure (see also \cite{KRS, Wu} for related developments).

The doubling constant $C_\mu$ given above should not be confused with the doubling constant of a metric space $(X,d)$, which is usually referred to as the minimal $k\in\mathbb N$ such that every ball $B(x,r)$ can be covered by at most $k$ balls of radius $r/2$. This leads to the definition of doubling dimension of $(X,d)$ as $K_{(X,d)}=\lceil \log_2(k)\rceil$, which is of significance in metric embedding theory (cf. \cite{ABN,A}). 

To motivate our goal, let us see what happens for a couple of particular, but significant, examples.  For $\alpha>-1$, consider the locally integrable measure $d\mu_\alpha(x)=|x|^\alpha\,dx$.  It is easy to see that $\mu_\alpha$ is doubling in $\mathbb R$ with the euclidean distance (this also follows from the fact that $|x|^\alpha$ is a weight in the Muckenhoupt class $A_\infty$ \cite{GCRF}). For the interval $I=(-1,1)$ we obtain that
 $$
 C_{\mu_\alpha}\ge\frac{\mu_\alpha(-2,2)}{\mu_\alpha(-1,1)} =2^{\alpha+1},
 $$
 and with $I=(1,3)$,
 $$
 C_{\mu_\alpha}\ge\frac{\mu_\alpha(0,4)}{\mu_\alpha(1,3)} =\frac{4^{\alpha+1}}{3^{\alpha+1}-1}.
 $$
 Hence,
 $$
 C_{\mu_\alpha}\ge\max\bigg\{2^{\alpha+1},\frac{4^{\alpha+1}}{3^{\alpha+1}-1}\bigg\}\ge 2,
 $$
 with equality $C_{\mu_\alpha}=2$ only when $\alpha=0$; i.e., for the Lebesgue measure. A second example, this time in the most trivial discrete setting, comes when we take a set $X=\{1,2\}$ with 2 points, any measure $\mu$ and any distance $d$. Then,
 $$
 C_\mu=\max\bigg\{1+\frac{\mu(\{1\})}{\mu(\{2\})},1+\frac{\mu(\{2\})}{\mu(\{1\})}\bigg\}\ge2.
 $$

 \medskip
 In the rest of this paper, we will start by giving, in Section~\ref{sec2}, some preliminary results which are of interest in this context. We also show some important properties for spaces of homogeneous type, like the fact that there is no upper bound for $C_\mu$ (Proposition~\ref{infinity}). In particular, we will show a short proof that $C_{(X,d)}\geq\varphi=\frac{1+\sqrt{5}}{2}$ (Proposition~\ref{goldenratio}), as long as $X$ contains more than one point. In Section~\ref{sec3}, we prove that actually $C_{(X,d)}\geq2$. Section \ref{sec4} is devoted to more general lower bounds for $C_{(X,d)}$, and finally, Section~\ref{sec5} provides explicit values of $C_{(X,d)}$ for specific metric spaces, both in the continuous and the discrete settings (see also \cite{LWW} for further considerations).
\medskip

The explicit value of the doubling constant $C_\mu$ is of significance in several recent developments in the theory of metric measure spaces (see for instance \cite{AB, BB, KKST}). We refer the reader to the monographs \cite{heinonen, HKST} for background and applications of this theory.

\section{Preliminary results and a universal lower bound for $C_\mu$}\label{sec2}

Throughout, we will always assume that $(X,d)$ is a metric space on which doubling measures exist, and that $X$ contains at least 2 points. Moreover, all balls $B(x,r)=\big\{y\in X:d(x,y)<r\big\}$ on $(X,d)$ are open sets 
and we only consider non-trivial measures $\mu$, in the sense that $0<\mu(B(x,r))<\infty$, for every $x\in X$ and $r>0$. Also, for $r\in \mathbb R$, as usual we denote 
$$
\lceil r\rceil=\min\{n\in\mathbb Z: r\leq n\}\quad\quad \text{and}\quad\quad \lfloor r \rfloor= \max\{n\in\mathbb Z: n\leq r\}.
$$

\begin{lemma}\label{lrs}
Let $(X,d,\mu)$ be a space of homogeneous type. For every $x\in X$ and $0<s<r$, we have
$$
\mu(B(x,r))\leq C_\mu^{\lceil\log_2(\frac{r}{s})\rceil} \mu(B(x,s)).
$$
\end{lemma}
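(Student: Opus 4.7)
The plan is to iterate the doubling inequality from \eqref{doublingC}, using monotonicity of $\mu$ to absorb the slack when $r/s$ is not exactly a power of $2$.

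First, I would set $n:=\lceil \log_2(r/s)\rceil$, which by definition satisfies $n-1<\log_2(r/s)\le n$, so that $r\le 2^n s$. Since the balls are open and the metric $d$ satisfies monotonicity in the radius, this gives the set inclusion $B(x,r)\subseteq B(x,2^n s)$, and hence by monotonicity of $\mu$,
$$
\mu(B(x,r))\le \mu(B(x,2^n s)).
$$

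Next, I would apply the doubling condition \eqref{doublingC} $n$ times to the ball $B(x,s)$: for every $j=0,1,\dots,n-1$,
$$
\mu(B(x,2^{j+1}s))\le C_\mu\,\mu(B(x,2^{j}s)).
$$
Chaining these $n$ inequalities yields $\mu(B(x,2^n s))\le C_\mu^{n}\mu(B(x,s))$, which combined with the previous display gives exactly the claimed bound
$$
\mu(B(x,r))\le C_\mu^{\lceil \log_2(r/s)\rceil}\mu(B(x,s)).
$$

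There is no real obstacle here; the only point that requires a sentence of justification is the step $r\le 2^n s\Rightarrow B(x,r)\subseteq B(x,2^n s)$, which is immediate from the definition of open balls and the standing assumption recalled at the beginning of Section~\ref{sec2}. The ceiling is needed precisely because $r/s$ need not be a dyadic power, and using $\lceil\cdot\rceil$ keeps the exponent an integer so that the doubling iteration is directly applicable.
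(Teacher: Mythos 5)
Your argument is correct and coincides with the paper's own proof: both set $n=\lceil\log_2(r/s)\rceil$, use $r\le 2^n s$ to get $B(x,r)\subseteq B(x,2^n s)$, and then iterate the doubling inequality \eqref{doublingC} $n$ times. Nothing is missing.
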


\begin{proof}
Let $n=\lceil\log_2(\frac{r}{s})\rceil$. This means that
$$2^{n-1}s< r\leq 2^n s.$$
Hence, iterating  \eqref{doublingC} it follows that
$$
\mu(B(x,r))\leq \mu(B(x,2^ns))\leq C_\mu \mu(B(x,2^{n-1}s))\leq\cdots\leq C_\mu^{\lceil\log_2(\frac{r}{s})\rceil} \mu(B(x,s)).
$$
\end{proof}

The following result is a quantitative version of \cite[Remarque, p.\ 67]{CW}:

\begin{proposition}\label{p:lowerboundsr}
Let $(X,d,\mu)$ be a  space of homogeneous type. If there exist $x\in X$ and $(y_j)_{j=1}^N\subset B(x,r)$ such that $B(y_j,s)\subset B(x,r)$, for some $0<s\leq 2r$, and $B(y_j,s)\cap B(y_k,s)=\emptyset$, for every $j,k\in\{1,\ldots, N\}$, $j\neq k$, then
$$
N\leq C_\mu^{\lceil\log_2(\frac{2r}{s})\rceil}.
$$
\end{proposition}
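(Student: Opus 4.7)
The plan is to get a uniform lower bound on each $\mu(B(y_j,s))$ in terms of $\mu(B(x,r))$, and then sum over $j$ using the disjointness hypothesis. Two facts drive the argument: the $N$ small balls together fit inside $B(x,r)$, while each of them is, by the doubling condition, not drastically smaller than $B(x,r)$ itself.

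First I would observe that since $y_j\in B(x,r)$, the triangle inequality gives $B(x,r)\subset B(y_j,2r)$: indeed, for $z$ with $d(x,z)<r$ one has $d(y_j,z)\le d(y_j,x)+d(x,z)<2r$. In particular $\mu(B(x,r))\le \mu(B(y_j,2r))$. Next, because $0<s\le 2r$, Lemma~\ref{lrs} applied at the center $y_j$ with radii $s<2r$ gives
$$
\mu(B(y_j,2r))\le C_\mu^{\lceil\log_2(2r/s)\rceil}\,\mu(B(y_j,s)).
$$
Chaining these two inequalities yields the uniform estimate
$$
\mu(B(y_j,s))\ge C_\mu^{-\lceil\log_2(2r/s)\rceil}\,\mu(B(x,r)),\qquad j=1,\dots,N.
$$

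To conclude, I would use the hypothesis that the $B(y_j,s)$ are pairwise disjoint subsets of $B(x,r)$ and sum:
$$
N\,C_\mu^{-\lceil\log_2(2r/s)\rceil}\,\mu(B(x,r))\le \sum_{j=1}^N \mu(B(y_j,s))\le \mu(B(x,r)).
$$
Since $0<\mu(B(x,r))<\infty$, dividing by $\mu(B(x,r))$ gives the claimed bound.

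The argument is essentially a one-shot calculation, so I do not expect any serious obstacle; the only delicate point is the placement of the exponent $\lceil\log_2(2r/s)\rceil$. The hypothesis $s\le 2r$ is exactly what is needed for Lemma~\ref{lrs} to apply at the pair $(s,2r)$ and for the exponent to be a non-negative integer, while the fact that one compares to $B(y_j,2r)$ (rather than some smaller ball containing $B(x,r)$) is what forces the ``$2r$'' inside the logarithm. Any attempt to save a factor here would require extra geometric information on the configuration of the $y_j$'s, which is not available under the stated assumptions.
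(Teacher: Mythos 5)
Your proof is correct and is essentially the paper's own argument: both rest on the inclusion $B(x,r)\subset B(y_j,2r)$ combined with Lemma~\ref{lrs} at the pair $(s,2r)$, followed by summing the disjoint balls inside $B(x,r)$. The only cosmetic difference is that the paper applies the estimate just to the ball of minimal measure and chains the inequalities, whereas you derive the uniform lower bound for every $j$ before summing.
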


\begin{proof}
Let $j_0\in\{1,\ldots,N \}$ be such that 
$$
\mu(B(y_{j_0},s))=\min_{j\in\{1,\ldots,N\}}\mu(B(y_j,s)).
$$
Since $B(x,r)\subset B(y_{j_0},2r)$, it follows that
\begin{eqnarray*}
N\mu(B(y_{j_0},s))&\leq &\sum_{j=1}^N\mu(B(y_j,s))\leq\mu(B(x,r))\\
&\leq&\mu(B(y_{j_0},2r))\leq C_\mu^{\lceil\log_2(\frac{2r}{s})\rceil}\mu(B(y_{j_0},s)).
\end{eqnarray*}
\end{proof}

The previous result yields the well-known fact that balls in spaces of homogeneous type are totally bounded. For completeness, we include a short proof:

\begin{corollary}\label{totallybounded}
If  $(X,d,\mu)$ is a  space of homogeneous type, then every ball in $X$ is totally bounded.
\end{corollary}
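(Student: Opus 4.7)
The plan is to deduce Corollary~\ref{totallybounded} by a standard maximal packing argument, using Proposition~\ref{p:lowerboundsr} to guarantee finiteness of the packing.

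Fix an arbitrary ball $B(x_0,R)$ in $X$ and an arbitrary $\varepsilon>0$; we need to show that $B(x_0,R)$ admits a finite cover by balls of radius $\varepsilon$. The first step is to consider a maximal $\varepsilon$-separated family $(y_j)_{j\in J}\subset B(x_0,R)$, meaning $d(y_j,y_k)\ge\varepsilon$ for all distinct $j,k\in J$ and no further point of $B(x_0,R)$ can be added without violating this. Such a family exists by a routine Zorn's lemma argument (or, once finiteness is established below, by iterated selection). By maximality, for every $z\in B(x_0,R)$ there must exist some $y_j$ with $d(z,y_j)<\varepsilon$, so
$$
B(x_0,R)\subset\bigcup_{j\in J} B(y_j,\varepsilon),
$$
and the corollary will follow as soon as we verify that $J$ is finite.

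To bound $|J|$, the second step is to apply Proposition~\ref{p:lowerboundsr} to the enlarged ball $B(x_0,R+\varepsilon/2)$ with radius parameter $s=\varepsilon/2$. The $\varepsilon$-separation of the $y_j$'s forces the balls $B(y_j,\varepsilon/2)$ to be pairwise disjoint, and since $y_j\in B(x_0,R)$ we have $B(y_j,\varepsilon/2)\subset B(x_0,R+\varepsilon/2)$. The admissibility condition $s\le 2(R+\varepsilon/2)$ is trivially satisfied, so the proposition yields
$$
|J|\le C_\mu^{\lceil\log_2((4R+\varepsilon)/\varepsilon)\rceil}<\infty.
$$
Combined with the covering observation above, this proves that $B(x_0,R)$ is totally bounded.

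There is essentially no obstacle here; the only minor care required is to pass from $B(x_0,R)$ to $B(x_0,R+\varepsilon/2)$ so that the disjoint small balls $B(y_j,\varepsilon/2)$ fit inside the larger ball, thereby meeting the hypotheses of Proposition~\ref{p:lowerboundsr}. The resulting bound on $|J|$ blows up as $\varepsilon\to 0$, which is harmless since total boundedness only requires finiteness for each fixed $\varepsilon$.
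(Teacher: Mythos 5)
Your argument is correct and follows essentially the same route as the paper: both take an $\varepsilon$-separated family in the ball (the paper builds it greedily, you take a maximal one), observe that the disjoint balls $B(y_j,\varepsilon/2)$ sit inside $B(x_0,R+\varepsilon/2)$, and invoke Proposition~\ref{p:lowerboundsr} to bound their number, so maximality gives the finite $\varepsilon$-cover. The only nitpick is that with $s=\varepsilon/2$ and enlarged radius $R+\varepsilon/2$ the exponent should be $\lceil\log_2((4R+2\varepsilon)/\varepsilon)\rceil$ rather than $\lceil\log_2((4R+\varepsilon)/\varepsilon)\rceil$, which is immaterial since only finiteness of the bound is used.
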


\begin{proof}
Let $x\in X$ and $r>0$. Take any $\varepsilon\in(0,2r)$. We can inductively construct a sequence of points in $B(x,r)$ which are $\varepsilon$-separated: let $x_0=x$, and if $B(x,r)\not\subset B(x,\varepsilon)$, let $x_1\in B(x,r)\backslash B(x,\varepsilon)$. Similarly, if $B(x,r)\not\subset B(x_0,\varepsilon)\cup B(x_1,\varepsilon)$, then let $x_2\in B(x,r)\backslash(B(x_0,\varepsilon)\cup B(x_1,\varepsilon))$. Following in this way, for each $m\in \mathbb N$ either $B(x,r)\subset \bigcup_{i=0}^m B(x_i,\varepsilon)$ or we can pick $x_{m+1}\in B(x,r)\backslash \bigcup_{i=0}^m B(x_i,\varepsilon)$, and keep going.

Since we clearly have that 
$$
B(x_i,\varepsilon/2)\cap B(x_j,\varepsilon/2)=\emptyset,
$$ 
for $i\neq j$, and 
$$
B(x_i,\varepsilon/2)\subset B(x,r+\varepsilon/2),
$$ 
for every $i$, Proposition \ref{p:lowerboundsr} yields that for some $m\leq C_\mu^{1+\lceil \log_2(1+2r/\varepsilon)\rceil}$ we must have 
$$
B(x,r)\subset \bigcup_{i=0}^m B(x_i,\varepsilon).
$$ 
Thus, $B(x,r)$ is totally bounded, as claimed.
\end{proof}

We observe next that if we replace, in Definition~\ref{optconstxd},  the infimum by the supremum, then we get no interesting information. In fact, we can prove the following:

\begin{proposition}\label{infinity} If $(X,d)$ is a metric space, containing at least 2 points, then
$$
\sup\big\{C_\mu:\mu \textrm{ doubling measure on }(X,d)\big\}=\infty.
$$
\end{proposition}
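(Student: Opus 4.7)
The plan is to take any doubling measure $\mu$ on $(X,d)$ (which exists by the standing hypothesis) and build from it a one-parameter family $\{\nu_\lambda\}_{\lambda\geq 1}$ of doubling measures whose doubling constants grow unboundedly in $\lambda$. The construction simply reweights $\mu$ by a factor $\lambda$ on the complement of one fixed ball.

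Concretely, I would first choose two distinct points $x_0, y_0 \in X$ and a radius $r_0$ satisfying $r_0 < d(x_0, y_0) < 2 r_0$ (for instance $r_0 = \tfrac{2}{3}d(x_0, y_0)$), so that $y_0$ lies in the annulus $B(x_0, 2r_0) \setminus B(x_0, r_0)$. Since $\mu$ is non-trivial on every open ball, a sufficiently small ball around $y_0$ still sits inside this annulus, and therefore
\[
c := \frac{\mu\bigl(B(x_0, 2r_0) \setminus B(x_0, r_0)\bigr)}{\mu(B(x_0, r_0))} > 0.
\]
For each $\lambda \geq 1$ I would then define the Borel regular measure
\[
\nu_\lambda(A) := \mu(A \cap B(x_0, r_0)) + \lambda\, \mu(A \setminus B(x_0, r_0)).
\]
The trivial sandwich $\mu \leq \nu_\lambda \leq \lambda \mu$ immediately gives that $\nu_\lambda$ is non-trivial and
\[
\frac{\nu_\lambda(B(x, 2r))}{\nu_\lambda(B(x, r))} \leq \lambda \cdot \frac{\mu(B(x, 2r))}{\mu(B(x, r))} \leq \lambda C_\mu,
\]
so $\nu_\lambda$ is doubling. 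On the other hand, evaluating the ratio at the distinguished ball $B(x_0, r_0)$ yields
\[
\frac{\nu_\lambda(B(x_0, 2r_0))}{\nu_\lambda(B(x_0, r_0))} = 1 + \lambda c,
\]
so $C_{\nu_\lambda} \geq 1 + \lambda c \to \infty$ as $\lambda \to \infty$, and the supremum is infinite.

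I do not expect any real obstacle here: the only step that could go wrong is that reweighting on part of the space destroys the doubling property, but the uniform sandwich $\mu \leq \nu_\lambda \leq \lambda \mu$ dispatches this in a single line. The construction is nothing more than a geometric version of the two-point example from the introduction, where $C_\mu$ is driven to infinity by unbalancing the ratio of the two point masses.
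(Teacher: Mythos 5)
Your proposal is correct and follows essentially the same route as the paper: reweight the given doubling measure by a bounded factor that is large outside a fixed ball but inside its double, use the two-sided comparison $\mu\leq\nu_\lambda\leq\lambda\mu$ to keep the doubling property, and let the weight grow so that the ratio at the distinguished ball blows up. The only (harmless) difference is that you locate a positive-measure annulus geometrically via a second point $y_0$ and $r_0=\tfrac{2}{3}\,d(x_0,y_0)$, whereas the paper finds it by using $C_\mu>1$ to choose a ball whose doubling ratio exceeds $1$.
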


\begin{proof}
Given a fixed doubling measure $\mu$ on $(X,d)$, we set $\varepsilon=\frac{C_\mu-1}{2C_\mu}>0$. Let $x\in X$ and $r>0$ such that
\begin{equation}\label{munu}
1<\frac{C_\mu+1}2=C_\mu(1-\varepsilon)<\frac{\mu(B(x,2r))}{\mu(B(x,r))}=1+\frac{\mu(B(x,2r)\setminus B(x,r))}{\mu(B(x,r))}.
\end{equation}
For $n\in\mathbb N$, let us define the function
$$
f_n(y)=\begin{cases}
   n,   & \text{ if } y\in B(x,2r)\setminus B(x,r), \\
    1,  & \text{otherwise},
\end{cases}
$$
and the measure $d\nu_n=f_nd\mu$. Since $f_n$ and $1/f_n$ are bounded functions, we have that $\nu_n$ is a doubling measure in $(X,d)$. Moreover, using \eqref{munu}
\begin{align*}
C_{\nu_n}&\ge\frac{\nu_n(B(x,2r))}{\nu_n(B(x,r))}=1+\frac{\nu_n(B(x,2r)\setminus B(x,r))}{\nu_n(B(x,r))}\\
&=1+n\,\frac{\mu(B(x,2r)\setminus B(x,r))}{\mu(B(x,r))}>n\frac{C_\mu-1}2+1\underset{n\to\infty}{\longrightarrow}\infty.
\end{align*}
\end{proof}

A point $x$ in a metric space $(X,d)$ is isolated if there is some $r>0$ such that $B(x,r)=\{x\}$. It is well-known that if $(X,d,\mu)$ is a space of homogeneous type, then $\mu(\{x\})>0$ if and only if $x$ is isolated \cite[Lemma 2]{KW}. For completeness, we are going to give another proof of this fact based on a reverse inequality.

\begin{proposition}
Let $(X,d)$ be a metric space with non-isolated points; that is,  the set $A=\{x\in X: x \text{ is non-isolated}\}\neq\emptyset$. If $\mu$ is a doubling measure in $(X,d)$, with constant $C_\mu$, then for every $x\in A$, there exists a decreasing sequence $r_j(x)\downarrow 0$, $j\to\infty$, such that
\begin{equation}\label{reverse}
\mu\big(B(x,4r_j(x))\big)\ge\big(1+C^{-2}_\mu\big)\mu\big(B(x,r_j(x))\big).
\end{equation}
In particular, $\mu(\{x\})=0$, for every $x\in A$.
\end{proposition}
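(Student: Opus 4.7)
The plan is to exploit non-isolation of $x$ to produce, at arbitrarily small scales, a point $y$ sitting in an annulus around $x$, and then to use the doubling condition to bound from below the mass that $y$ contributes to the big ball.

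More precisely, since $x\in A$, I can pick a sequence of points $y_j\in X\setminus\{x\}$ with $s_j:=d(x,y_j)\downarrow 0$. For each $j$, set $r_j=r_j(x):=s_j/2$, so that $d(x,y_j)=2r_j$. The two open balls $B(x,r_j)$ and $B(y_j,r_j)$ are then disjoint (by the triangle inequality), and both are contained in $B(x,4r_j)$, since any $z\in B(y_j,r_j)$ satisfies $d(x,z)<2r_j+r_j=3r_j<4r_j$. Therefore
\begin{equation*}
\mu\bigl(B(x,4r_j)\bigr)\;\geq\;\mu\bigl(B(x,r_j)\bigr)+\mu\bigl(B(y_j,r_j)\bigr).
\end{equation*}

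The crux is to control $\mu(B(y_j,r_j))$ from below by $\mu(B(x,r_j))$. For this, I would observe that $B(x,r_j)\subset B(y_j,3r_j)\subset B(y_j,4r_j)$, and then apply the doubling inequality \eqref{doublingC} twice (or invoke Lemma~\ref{lrs} with $r=4r_j$, $s=r_j$) to obtain
\begin{equation*}
\mu\bigl(B(x,r_j)\bigr)\;\leq\;\mu\bigl(B(y_j,4r_j)\bigr)\;\leq\;C_\mu^{2}\,\mu\bigl(B(y_j,r_j)\bigr).
\end{equation*}
Plugging this into the previous inequality yields \eqref{reverse}. Extracting a strictly decreasing subsequence from $(r_j)$ gives the required $r_j(x)\downarrow 0$.

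For the final assertion, suppose $\mu(\{x\})>0$. Since balls are totally bounded (Corollary~\ref{totallybounded}) and $\mu$ is finite on balls, $\mu(B(x,r))$ is continuous from above at $r\to 0^+$ through $\mu(\{x\})$, so both $\mu(B(x,r_j))$ and $\mu(B(x,4r_j))$ converge to $\mu(\{x\})$. Letting $j\to\infty$ in \eqref{reverse} gives $\mu(\{x\})\geq (1+C_\mu^{-2})\mu(\{x\})$, forcing $\mu(\{x\})=0$, a contradiction.

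I do not anticipate a serious obstacle; the main care point is to set $r_j$ precisely so that $y_j$ lies safely in the annulus $B(x,4r_j)\setminus B(x,r_j)$ while still letting $B(y_j,r_j)$ be disjoint from $B(x,r_j)$ and contained in $B(x,4r_j)$. The choice $r_j=d(x,y_j)/2$ achieves all three simultaneously, and the exponent $2$ in $C_\mu^{-2}$ comes precisely from having to iterate the doubling inequality twice to jump from radius $r_j$ to radius $4r_j$.
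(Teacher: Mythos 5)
Your proof is correct and follows essentially the same route as the paper: take $r_j=d(x,y_j)/2$, use disjointness of $B(x,r_j)$ and $B(y_j,r_j)$ inside $B(x,4r_j)$, and the inclusion $B(x,r_j)\subset B(y_j,4r_j)$ with two doubling steps to get the factor $C_\mu^{-2}$, then let $r_j\downarrow 0$ and use continuity from above of $\mu$ to conclude $\mu(\{x\})=0$. The only cosmetic differences are that the paper builds the radii recursively (forcing $r_{j+1}<r_j/2$) rather than extracting a subsequence, and your appeal to total boundedness in the last step is unnecessary.
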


\begin{proof}
Let $x\in A$ and pick $x_1\in X\setminus\{x\}$. Set $r_1=d(x,x_1)/2>0$. Then it is easy to see that
\begin{itemize}
\item[(i)] $B(x_1,r_1)\cap B(x,r_1)=\emptyset,$
\item[(ii)] $B(x_1,r_1)\subset B(x,4r_1),$ and
\item[(iii)] $B(x,r_1)\subset B(x_1,4r_1).$
\end{itemize}
Therefore,
\begin{align*}
\mu\big(B(x,4r_1)\big)&\ge \mu\big(B(x_1,r_1)\big)+\mu\big(B(x,r_1)\big)\ge \frac1{C^{2}_\mu}\mu\big(B(x_1,4r_1)\big)+\mu\big(B(x,r_1)\big)\\
&\ge \big(1+C^{-2}_\mu\big)\mu\big(B(x,r_1)\big).
\end{align*}
We now choose $x_2\in B(x,r_1)\setminus\{x\}$, and define $r_2=d(x,x_2)/2<r_1/2<r_1$. As before, we have
\begin{itemize}
\item[(i)] $B(x_2,r_2)\cap B(x,r_2)=\emptyset,$
\item[(ii)] $B(x_2,r_2)\subset B(x,4r_2),$ and
\item[(iii)] $B(x,r_2)\subset B(x_2,4r_2).$
\end{itemize}
Thus, we also get
$$
\mu\big(B(x,4r_2)\big)\ge  \big(1+C^{-2}_\mu\big)\mu\big(B(x,r_2)\big).
$$
Iterating this process, we obtain $r_j<r_{j-1}/2<r_1/2^{j-1}$, $j=2,3,\dots$, so that $r_j\downarrow0$ and \eqref{reverse} holds. 

Finally, if $x\in A$, then
$$
\mu(\{x\})=\mu\bigg(\bigcap_{j\in\mathbb N}B(x,4r_j(x))\bigg)=\lim_{j\to\infty}\mu(B(x,4r_j(x)))\ge \big(1+C^{-2}_\mu\big)\mu(\{x\}),
$$
which proves that $\mu(\{x\})=0$.
\end{proof}

We finish this section with an easy proof of a weaker lower bound than the one given in Theorem~\ref{Cgeq2}, but still of interest (for some time, it was the best known estimate):

\begin{proposition}\label{goldenratio}
If $(X,d)$ is any metric space with $|X|>1$, then $C_{(X,d)}\geq \varphi=\frac{1+\sqrt{5}}{2}$.
\end{proposition}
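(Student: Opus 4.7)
The plan is to show that any doubling measure $\mu$ on $(X,d)$ with doubling constant $C=C_\mu$ satisfies $C(C-1)\geq 1$, which rearranges to $C\geq \varphi$; taking the infimum over $\mu$ will then yield the statement. I would fix two distinct points $x,y\in X$ and set $r=d(x,y)>0$.

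My approach uses an \emph{asymmetric} pair of disjoint balls, of radii $r/3$ and $2r/3$, around $x$ and $y$ respectively. First I would check that $B(x,r/3)$ and $B(y,2r/3)$ are disjoint (their radii sum exactly to $r=d(x,y)$, contradicting the triangle inequality for any common point) and that both are contained in $B(y,4r/3)$. Additivity combined with a single application of the doubling condition $\mu(B(y,4r/3))\leq C\,\mu(B(y,2r/3))$ then yields $\mu(B(x,r/3))\leq (C-1)\,\mu(B(y,2r/3))$. A further doubling $\mu(B(y,2r/3))\leq C\,\mu(B(y,r/3))$ gives
$$
\mu(B(x,r/3))\leq C(C-1)\,\mu(B(y,r/3)),
$$
and exchanging the roles of $x$ and $y$ in the same argument produces the reverse inequality $\mu(B(y,r/3))\leq C(C-1)\,\mu(B(x,r/3))$. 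Placing the larger of the two masses on the left and the smaller on the right then forces $C(C-1)\geq 1$.

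The crux is the asymmetric choice of radii $(r/3,2r/3)$. A symmetric pair of disjoint balls $B(x,r/2)$ and $B(y,r/2)$ would only fit inside $B(x,3r/2)$, whose mass relates to $\mu(B(x,r/2))$ only through \emph{two} doublings, and this yields the weaker bound $C^2-1\geq 1$, i.e., $C\geq\sqrt{2}$. The asymmetric pair is calibrated so that the enclosing ball $B(y,4r/3)$ is reached from $B(y,2r/3)$ in a single doubling, which replaces the factor $C^2-1$ by $C-1$; one additional doubling then brings the estimate back to the common scale $r/3$ on both sides. Recognising that this precise mismatch of scales is what produces the quadratic inequality $C^2-C-1\geq 0$ is the main design step.
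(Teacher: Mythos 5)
Your proof is correct, and while it uses the same geometric configuration as the paper (the disjoint pair of balls of radii $r/3$ and $2r/3$ about the two points, enclosed in a ball of radius $4r/3$ centred at the centre of the larger one), the way you extract the bound is genuinely different. The paper compares $\mu(B(x,2r/3))$ with $\mu(B(y,r/3))$ through an auxiliary parameter $\lambda$, splits into three cases according to which of the inequalities $\mu(B(x,2r/3))\leq\lambda\mu(B(y,r/3))$ or $\mu(B(y,2r/3))\leq\lambda\mu(B(x,r/3))$ holds, uses only \emph{one} application of the doubling inequality in each case, and then optimizes $\sup_{\lambda>0}\min\{\lambda,1+1/\lambda\}$. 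You instead spend a second doubling, $\mu(B(y,2r/3))\leq C\mu(B(y,r/3))$, to bring both points to the common scale $r/3$; this yields $\mu(B(x,r/3))\leq C(C-1)\mu(B(y,r/3))$ together with its mirror image, and a WLOG choice of the larger mass (or simply multiplying the two inequalities) gives $C(C-1)\geq1$ with no case analysis and no optimization. The extra doubling costs nothing here: both routes land exactly on $C^2-C-1\geq0$, i.e.\ $C\geq\varphi$, and your side remark about the symmetric radii $(r/2,r/2)$ giving only $C\geq\sqrt{2}$ is also accurate. What the paper's parameterized scheme buys is a template that generalizes: the same three-case, optimize-in-$\lambda$ argument is reused later for spaces containing a copy of $K_n$ to get $C_{(X,d)}\geq\frac{1+\sqrt{4n-3}}{2}$, of which Proposition~\ref{goldenratio} is the case $n=2$; what your version buys is a shorter, case-free derivation that exhibits the golden-ratio quadratic directly.
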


\begin{proof}
Pick $x,y\in X$ and set $r=d(x,y)>0$. Take any $\lambda>0$. Suppose first 
\begin{equation}\label{xy}
\mu(B(x, {2r}/{3}))\leq\lambda\mu(B(y, {r}/{3})).
\end{equation}
In this case, since $B(x, {2r}/{3})\cap B(y, {r}/{3})=\emptyset$ and
$$
B(x, {2r}/{3})\cup B(y, {r}/{3})\subset B(x, {4r}/{3}),
$$
it follows that
$$
C_\mu\mu(B(x, {2r}/{3}))\geq \mu(B(x, {4r}/{3}))\geq (1+ 1/\lambda)\mu(B(x, {2r}/{3})).
$$
Thus, in this case, we have $C_\mu\geq 1+1/\lambda$.

Similarly, if
\begin{equation}\label{yx}
\mu(B(y, {2r}/{3}))\leq\lambda\mu(B(x, {r}/{3})),
\end{equation}
then one also gets $C_\mu\geq1+1/\lambda$. Finally, let us assume that neither   \eqref{xy} nor \eqref{yx} hold. In that case we have
\begin{align*}
C_\mu(\mu(B(x, {r}/{3}))+\mu(B(y, {r}/{3})))&\geq \mu(B(x, {2r}/{3}))+\mu(B(y, {2r}/{3}))\\
&>\lambda (\mu(B(y, {r}/{3}))+\mu(B(x, {r}/{3}))),
\end{align*}
which gives $C_\mu>\lambda$.

Since this works for any $\lambda>0$, in any case we get 
$$
C_\mu \geq \sup_{\lambda>0}\min\{\lambda,1+{1}/{\lambda}\}.
$$
Optimizing in $\lambda>0$, the result follows.
\end{proof}

It should be noted that, despite the apparently irrelevant  choice of radii of the balls considered in the previous argument (that is, $ {r}/{3}, {2r}/{3}$, and $ {4r}/{3}$), any other combination actually  yields a weaker estimate.

\begin{remark} It is interesting to observe that $C_{(X,d)}$ actually depends on the metric $d$, and is not invariant under homeomorphisms. We give first an example for    discrete spaces and afterwards a stronger result in the continuous case:

For the complete graph $K_3$, with the standard metric, it is easy to see that $C_{K_3}=3$ (see Proposition \ref{comgra} for the general case of $K_n$). If we now label the vertices   as $K_3=\{a,b,c\}$ and define the metric $d(a,c)=d(b,c)=2$ and $d(a,b)=1$, then taking the measure $\mu(a)=\mu(b)=1$ and $\mu(c)=2$, we get
$$
\frac{\mu(B(a,2r))}{\mu(B(a,r))}=\begin{cases}
 1,     & \text{if } 0<r\le1/2  \text{ or } r>2, \\
  2,    & \text{otherwise},
\end{cases}
$$
and similarly for $b$. At the vertex c:
 $$
\frac{\mu(B(c,2r))}{\mu(B(c,r))}=\begin{cases}
 1,     & \text{if } 0<r\le1  \text{ or } r>2, \\
  2,    & \text{otherwise}.
\end{cases}
$$
Thus, we find that $C_{(K_3,d)}\le C_\mu\le 2\neq3=C_{K_3}$ (in fact, using Proposition~\ref{p:isolated} we have that $C_{(K_3,d)}=2$).

In the continuous setting of $\mathbb R$   one can even show that there are metrics $d_1$ and $d_2$ for which $(\mathbb R, d_1)$ and $(\mathbb R,d_2)$ are homeomorphic but $C_{(\mathbb R,d_1)}<\infty$ and $C_{(\mathbb R,d_2)}=\infty$. In fact, taking $d_1$ to be the euclidean metric and
$$
d_2(x,y)=\frac{|x-y|}{1+|x-y|},
$$
we know that the two metrics are topologically equivalent. However, if for $0<r<1$ we consider the balls
$$
B_{d_2}(x,r)=\Big(x-\frac{r}{1-r},x+\frac{r}{1-r}\Big),
$$
it is clear that we can find $\bigg[\frac{\frac{r}{1-r}}{\frac{1/2}{1-1/2}}\bigg]=\big[\frac{r}{1-r}\big]$ disjoint balls of radius $1/2$ inside $B_{d_2}(0,r)$. Thus, using Proposition~\ref{p:lowerboundsr}, if $\mu$ were a doubling measure in $(\mathbb R,d_2)$ we get
$$
C_\mu\ge\Big[\frac{r}{1-r}\Big]^{1/\lceil2+\log_2 r\rceil}\underset{r\to1^-}{\longrightarrow}\infty,
$$
and hence $C_{(\mathbb R,d_2)}=\infty$, while $C_{(\mathbb R,d_1)}\le C_{|\cdot|}=2$, where $|\cdot|$ is the Lebesgue measure  (in fact, using Proposition~\ref{crn}, we do have the equality $C_{(\mathbb R,d_1)}=2$).
\medskip

Another natural example to consider here would be the ``snowflaking'' of $(\mathbb R, d_1)$: fix $0<\varepsilon<1$, and let $d_\varepsilon(x,y)=|x-y|^\varepsilon$. A similar argument as above yields that $C_{(\mathbb R, d_\varepsilon)}\geq 2^{1/\varepsilon}>2=C_{(\mathbb R, d_1)}$.
\end{remark}

\section{On the lower bound $C_{(X,d)}\geq2$}\label{sec3}

The purpose of this section is to prove the following

\begin{theorem}\label{Cgeq2}
If $(X,d)$ is a metric space with $|X|>1$, then $C_{(X,d)}\geq2$.
\end{theorem}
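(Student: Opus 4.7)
The plan is to argue by contradiction: suppose $(X,d)$ carries a doubling measure $\mu$ with $C_\mu = c < 2$, and derive a contradiction from the existence of at least two points. Fix distinct $x, y \in X$, set $r = d(x,y) > 0$, and write $a(\rho) = \mu(B(x,\rho))$, $b(\rho) = \mu(B(y,\rho))$. The basic geometric observation I would rely on is that whenever $s, t > 0$ satisfy $s + t \leq r$, the triangle inequality makes $B(x,s)$ and $B(y,t)$ disjoint and both contained in $B(x, r+t)$, giving
$$
a(r+t) \;\geq\; a(s) + b(t),
$$
which combined with the doubling estimate of Lemma \ref{lrs}, $a(r+t) \leq c^{\lceil \log_2((r+t)/s)\rceil}\, a(s)$, yields $b(t) \leq \bigl(c^N-1\bigr) a(s)$ with $N = \lceil \log_2((r+t)/s)\rceil$, plus the symmetric inequality with $x$ and $y$ exchanged.

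To push beyond the $\varphi$-bound of Proposition \ref{goldenratio}, which uses only the single scale $(s,t) = (2r/3, r/3)$ for which $N = 1$, the plan is to exploit the whole family of valid pairs $(s,t)$ simultaneously rather than to just multiply the two symmetric inequalities (which recovers $c \geq \varphi$). One natural route is a self-improving / bootstrap argument: given an a priori bound $c \geq c_0$, convert it into sharper bounds on the ratios $a(\rho)/b(\rho)$ and $a(\rho_1)/a(\rho_2)$ and re-substitute. A second possibility is a telescoping chain along a sequence of auxiliary points $x_0 = x, x_1, \dots, x_k = y$ carefully chosen so that the local factor $(c-1)$ compounds additively rather than geometrically, matching the one-dimensional Lebesgue behaviour which saturates the bound. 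A third is a limiting argument as $t \to 0$ (using that either $y$ is non-isolated, so $b(t) \to 0$ at a rate controlled by $c$, or $y$ is isolated with $\mu(\{y\}) > 0$, allowing a two-point-type argument as in the introduction).

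The main obstacle is exactly the gap between $\varphi$ and $2$: the elementary manipulations of the symmetric pair of disjointness-plus-doubling inequalities (multiplication, maximization over a parameter $\lambda$, or passing to the larger ball $B(x, 2d(x,y))$) all run into a geometric-mean loss that caps the attainable bound at $\varphi$ or $\sqrt{2}$. The improvement to the sharp constant $2$ seems to require a genuinely global/iterative argument that avoids this loss; in particular, since the bound is achieved by Lebesgue measure on $\mathbb{R}$, the proof must be sensitive to the one-dimensional extremal configuration and cannot proceed by an estimate that is strict for $\mathbb{R}$. I expect most of the technical work to go into setting up the right chain of scales and keeping track of the accumulated doubling exponents.
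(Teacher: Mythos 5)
Your proposal is a plan rather than a proof: you correctly isolate the obstruction (any argument based on the single symmetric pair of disjointness-plus-doubling inequalities at one scale caps out at $\varphi$), but none of the three routes you then list is actually carried out, and the step that would close the gap from $\varphi$ to $2$ is precisely the one left unspecified. In particular, your ``telescoping chain of auxiliary points $x_0=x,\dots,x_k=y$'' presupposes that such intermediate points exist, which an arbitrary metric space need not provide; and your third route (isolated versus non-isolated $y$, letting $t\to0$) does not obviously improve on $\varphi$ either, since the decay of $b(t)$ is again controlled only by fixed-scale doubling inequalities of the same type. As written, the argument cannot be checked because the key quantitative mechanism is missing.

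For comparison, the paper's proof supplies exactly the ingredients your sketch is missing. The quantitative engine is the notion of an $(m,r,\theta)$-configuration: $\lceil\theta m\rceil$ points in the annulus $r(1+\frac1{2m})\le d(x_0,x_i)\le r(2-\frac1{2m})$ which are $\frac rm$-separated, so that the balls $B(x_i,\frac{r}{2m})$ are pairwise disjoint, disjoint from $B(x_0,r)$, contained in $B(x_0,2r)$, and each comparable to $B(x_0,r)$ with doubling exponent of order $\log_2 m$; this yields $C_\mu^{n+5}-C_\mu^{n+4}\ge\theta 2^{n-1}$ and hence $C_\mu\ge x_n\to2$ (Lemma~\ref{rootto2}, Theorem~\ref{t:thetacon}) --- note the bound $2$ is only reached in the limit over arbitrarily long configurations, not at any fixed scale. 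The structural part is a dichotomy: if some closed ball around $x_0$ is compact, then either every radius in $(r_0/2,r_0)$ is attained (producing the configurations above), or there is a gap radius, which via a minimizing pair $x_A,x_B$ gives $B(x_A,d(x_A,x_B))\cap B(x_B,d(x_A,x_B))=\emptyset$ and then $C_\mu\ge2$ by Corollary~\ref{cor:int} (Proposition~\ref{compact}); isolated points are handled separately by Proposition~\ref{p:isolated}. Finally --- and this is entirely absent from your proposal --- the general case is reduced to the complete one: balls are totally bounded (Corollary~\ref{totallybounded}), so in the completion $\tilde X$ closed balls are compact, and by Saksman's extension lemma \cite{S} the measure $\mu$ extends to $\tilde\mu$ on $\tilde X$ with $C_{\tilde\mu}\le C_\mu$, so Corollary~\ref{completecons2} applies. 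Without an analogue of this completion step, or of the compactness dichotomy, your bootstrap/chain ideas have no mechanism to produce the points or scales they need, so the proposal has a genuine gap at its core.
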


For the proof of this fact we will need first a series of preliminary results.

\begin{proposition}\label{p:isolated}
If $(X,d)$ is a metric space, with $|X|>1$, which has an isolated point, then $C_{(X,d)}\geq2$. In particular, this is the case on discrete or finite metric spaces.

\end{proposition}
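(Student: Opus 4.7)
The plan is to show that every doubling measure $\mu$ on $(X,d)$ satisfies $C_\mu\ge 2$, which yields $C_{(X,d)}\ge 2$. Write $C=C_\mu$, let $x$ denote an isolated point, and set
\[
\rho:=\sup\{r>0:B(x,r)=\{x\}\}>0,\qquad a:=\mu(\{x\})>0,
\]
noting that $B(x,\rho)=\{x\}$. The idea is a two-point estimate exploiting the ``atom'' $a$ at $x$: I fix a point $y\ne x$ with $s:=d(x,y)$ as close as possible to $\rho$ (so $s\ge\rho$, and such a $y$ exists in any interval $[\rho,\rho+\varepsilon)$ by definition of $\rho$), and apply the doubling inequality at both $x$ and $y$ with the same radius $s$.

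The geometric input is that, because the balls are open and $d(y,x)=s$, one has $x\in B(y,2s)\setminus B(y,s)$, while by the triangle inequality $B(y,s)\subset B(x,2s)\setminus\{x\}$. Doubling at $y$ therefore gives
\[
C\,\mu(B(y,s))\ge \mu(B(y,2s))\ge \mu(B(y,s))+a,
\]
i.e.\ $\mu(B(y,s))\ge a/(C-1)$. Combining this lower bound with the inclusion $B(y,s)\subset B(x,2s)\setminus\{x\}$ and with doubling at $x$,
\[
\frac{a}{C-1}\le \mu(B(y,s))\le \mu(B(x,2s))-a\le C\,\mu(B(x,s))-a,
\]
which after rearrangement reads $\mu(B(x,s))\ge a/(C-1)$.

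To close the argument I use that $B(x,r)=\{x\}$ for every $r\le\rho$, so $\mu(B(x,\rho))=a$. If the infimum defining $\rho$ is attained at some $y$, I take $s=\rho$, so $\mu(B(x,s))=a$ and the previous inequality gives $a\ge a/(C-1)$, hence $C\ge 2$. Otherwise I pick $y_n$ with $s_n:=d(x,y_n)\downarrow\rho$; continuity of $\mu$ from above yields $\mu(B(x,s_n))\to\mu(\{z:d(x,z)\le\rho\})=a$, using that the sphere $\{d(x,\cdot)=\rho\}$ is empty in this case. Passing to the limit in $\mu(B(x,s_n))\ge a/(C-1)$ again gives $C\ge 2$.

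The main subtlety I expect is precisely the non-attained case, where one has to replace the clean identity $\mu(B(x,s))=a$ by a limit argument controlling the measure of the shrinking annuli $\{\rho\le d(x,\cdot)<s\}$; apart from this the proof reduces to a single application of doubling at $x$ and a single one at $y$, with the atom at $x$ providing the additive gap that drives the estimate. The ``in particular'' statement about discrete or finite metric spaces is then immediate, since every point in such a space is isolated.
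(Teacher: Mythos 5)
Your argument is correct: both inclusions ($x\in B(y,2s)\setminus B(y,s)$ and $B(y,s)\subset B(x,2s)\setminus\{x\}$) hold, the division by $C-1$ is legitimate because the inequality $C\mu(B(y,s))\ge\mu(B(y,s))+a$ with $a=\mu(\{x\})=\mu(B(x,\rho))>0$ already forces $C>1$, and in the non-attained case the continuity-from-above step works (a strictly decreasing sequence $s_n\downarrow\rho$ exists precisely because the infimum of distances is not attained, the intersection of the balls $B(x,s_n)$ is then $\{x\}$, and all balls have finite measure by the standing assumptions of the paper). The mechanism is the same as in the paper---play the atom at the isolated point against a nearby point $y$ with $d(x,y)$ close to $\rho$ and apply doubling at both centers---but the execution differs in a way worth noting. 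The paper takes $y$ with $r_x\le d(x,y)<r_x(1+\varepsilon)$, $\varepsilon<1/3$, and uses the ball $B\big(y,\tfrac{r_x(1+\varepsilon)}{2}\big)$, whose radius is tuned so that it sits inside $B(x,2r_x)\setminus\{x\}$; since $B(x,r_x)=\{x\}$ exactly, a single doubling at $x$ gives the upper bound $\mu(B_y)\le(C_\mu-1)\mu(\{x\})$ directly in terms of the atom, and combining with doubling at $y$ yields $C_\mu\ge 1+\frac{1}{C_\mu-1}$, i.e.\ $C_\mu\ge2$, with no limit and no case distinction. Your choice of radius $s=d(x,y)$ at both centers instead produces a bound involving $\mu(B(x,s))$, which you must then push back down to the atom; this is exactly what creates the attained/non-attained dichotomy and the measure-theoretic limit in your write-up. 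Both routes are valid; the paper's radius choice buys a shorter, purely finitary argument, while yours is perhaps the more naive first attempt carried through carefully.
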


\begin{proof}
Let $x\in X$ be an isolated point, and let $r_x=\sup\big\{r>0:B(x,r)=\{x\}\big\}$. Observe that $0<r_x<\infty$ and $B(x,r_x)=\{x\}$. Take $\varepsilon\in(0, 1/3)$. By definition of $r_x$, there is $y\in X$, with $r_x\leq d(x,y)< r_x(1+\varepsilon)$. Let now $\mu$ be a doubling measure on $(X,d)$. Since $B\big(y,\frac{r_x(1+\varepsilon)}{2}\big)\subset B(x,2r_x)\backslash\{x\}$, it follows that
$$
\mu\Big(B\Big(y,\frac{r_x(1+\varepsilon)}{2}\Big)\Big)\leq \mu(B(x,2r_x))-\mu(\{x\})\le(C_\mu-1)\mu(\{x\}).
$$
Now, since $B\big(y,\frac{r_x(1+\varepsilon)}{2}\big)\cup\{x\}\subset B(y,r_x(1+\varepsilon))$, we get
\begin{align*}
C_\mu\,\mu\Big(B\Big(y,\frac{r_x(1+\varepsilon)}{2}\Big)\Big)&\ge\mu(B(y,r_x(1+\varepsilon)))\geq\mu\Big(B\Big(y,\frac{r_x(1+\varepsilon)}{2}\Big)\Big)+\mu(\{x\})\\
&\ge\Big(1+\frac{1}{C_\mu-1}\Big)\mu\Big(B\Big(y,\frac{r_x(1+\varepsilon)}{2}\Big)\Big).
\end{align*}
Therefore, $C_\mu^2-C_\mu\ge C_\mu$ and hence $C_\mu\ge 2$.
\end{proof}

\medskip

As a complement to Proposition~\ref{p:isolated}, we will see next that if a metric space contains a line segment (which, in some sense, is the contrary to having an isolated point), then a different argument yields the same lower bound. Before, we need the following definition.

\begin{definition}
Let $m\in\mathbb N$, $r>0$, $\theta\in(0,1]$. An $(m,r,\theta)$-configuration is a finite collection of points $(x_i)_{i=0}^{\lceil \theta m \rceil}\subset X$ such that, for every $1\leq i\leq \lceil \theta m \rceil$, we have
\begin{equation}\label{distance_0i}
r\Big(1+\frac{1}{2m}\Big)\leq d(x_0,x_i)\leq r\Big(2-\frac{1}{2m}\Big),
\end{equation}
and for $i,j\in\{1,\ldots,\lceil \theta m \rceil\}$, with $i\neq j$,
\begin{equation}\label{distance_ij}
d(x_i,x_j)\geq \frac{r}{m}.
\end{equation}
\end{definition}

Given $\theta\in(0,1]$, we will say that a metric space $(X,d)$ contains arbitrarily long $\theta$-configurations if, for every $n\in\mathbb N$, there exist $m\geq n$, $r_m>0$ and an $(m,r_m,\theta)$-configuration $(x_i)_{i=0}^{\lceil \theta m \rceil}\subset X$.

\begin{lemma}\label{rootto2}
Let $\theta\in(0,1]$. For each $n\in\N$, the equation
\begin{equation}\label{polinx}
x^{n+5}-x^{n+4}-\theta2^{n-1}=0
\end{equation}
has a unique solution $x_n>1$, which satisfies $x_n\underset{n\rightarrow\infty}\longrightarrow2$.
\end{lemma}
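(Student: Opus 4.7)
The plan is to study the polynomial $p_n(x)=x^{n+5}-x^{n+4}-\theta 2^{n-1}=x^{n+4}(x-1)-\theta 2^{n-1}$ directly via elementary one-variable calculus. For existence and uniqueness of a root $x_n>1$, I would note that $p_n(1)=-\theta 2^{n-1}<0$ while $p_n(x)\to\infty$ as $x\to\infty$, and that the derivative
$$p_n'(x)=x^{n+3}\bigl((n+5)x-(n+4)\bigr)$$
is strictly positive on $(1,\infty)$, since $(n+5)x-(n+4)\geq 1$ there. Hence $p_n$ is strictly increasing on $(1,\infty)$, and the intermediate value theorem provides exactly one $x_n>1$ with $p_n(x_n)=0$.

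Next, to place $x_n$ below $2$, I would evaluate $p_n(2)=2^{n+4}-\theta 2^{n-1}=2^{n-1}(32-\theta)$, which is positive because $\theta\in(0,1]$. The monotonicity from the previous step then forces $x_n<2$ for every $n\in\mathbb{N}$.

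Finally, for the convergence $x_n\to 2$, I would introduce $a_n=2-x_n\in(0,1)$ and rewrite $p_n(x_n)=0$ as
$$\Big(1-\tfrac{a_n}{2}\Big)^{n+4}(1-a_n)=\frac{\theta}{32}.$$
If $a_n\not\to 0$, then some subsequence would satisfy $a_{n_k}\geq\delta>0$ for a fixed $\delta$, so $(1-a_{n_k}/2)^{n_k+4}\leq(1-\delta/2)^{n_k+4}\to 0$; since $1-a_n\leq 1$, the left-hand side would tend to $0$, contradicting the fixed positive right-hand side. Therefore $a_n\to 0$, i.e., $x_n\to 2$.

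I do not foresee any real obstacle: the statement is essentially a calculus exercise, and the monotonicity of $p_n$ makes each step nearly automatic. The only mildly delicate point is recognising that the appropriate substitution for the asymptotic part is $a_n=2-x_n$, which recasts the defining equation as a product of an exponentially decaying geometric factor and a bounded factor, and thereby isolates the mechanism that pins $x_n$ to the value $2$ in the limit.
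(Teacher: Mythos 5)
Your proof is correct. For existence and uniqueness you argue exactly as the paper does (the polynomial is strictly increasing on $(1,\infty)$, negative at $1$, unbounded), and your key identity $\bigl(1-\tfrac{a_n}{2}\bigr)^{n+4}(1-a_n)=\tfrac{\theta}{32}$ is, after the affine change $a_n=2(1-y_n)$, the same rewriting the paper obtains with $y_n=x_n/2$, namely $2^5y_n^{n+4}=\theta/(2y_n-1)$; likewise both arguments get $x_n<2$ from the sign of the polynomial at $2$. Where you genuinely diverge is the passage to the limit: the paper first proves that the sequence $y_n$ is monotone increasing (a small contradiction argument using the identity), deduces that a limit $L\in(1/2,1]$ exists, and then rules out $L<1$; you skip the monotonicity entirely and argue directly that if $a_n\not\to0$ then along a subsequence the factor $(1-\delta/2)^{n_k+4}$ decays geometrically, forcing the left-hand side to $0$ and contradicting the constant positive right-hand side. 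Your route is shorter and more elementary, since it never needs the sequence $x_n$ to converge a priori; the paper's route costs the extra monotonicity step but yields as a by-product that $x_n$ increases to $2$, which makes the assertion $\sup_n x_n=2$ used in the proof of Theorem~\ref{t:thetacon} immediate (though, since $x_n<2$ and $x_n\to2$ in your version as well, that application is unaffected).
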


\begin{proof}
Let us consider the function $f(x)=x^{n+5}-x^{n+4}-\theta2^{n-1}$. It is easy to check that $f$ is increasing and unbounded, for $x>1$, and since $f(1)<0$, we can define $x_n$ to be the only zero of $f$ in the set $(1,\infty)$. Let us denote $y_n={x_n}/2$, so that $\eqref{polinx}$ gives the equality
\begin{equation}\label{y_n}
2^5 y_n^{n+4}=\frac{\theta}{2y_n-1}.
\end{equation}
Since $f(2)>0$, it follows that $1/2<y_n<1$, for every $n$. We now prove that $y_n$ is monotone increasing with $n$. Indeed, suppose that there exists $n_0\in\mathbb N$, such that $y_{n_0}>y_{n_0+1}$. Then using \eqref{y_n}, we get
$$
\frac{y_{n_0}^{n_0+4}}{y_{{n_0}+1}^{{n_0}+5}}=\frac{2y_{{n_0}+1}-1}{2y_{n_0}-1},
$$
which yields
$$
1<\Big(\frac{y_{n_0}}{y_{{n_0}+1}}\Big)^{{n_0}+4}=y_{{n_0}+1}\frac{2y_{{n_0}+1}-1}{2y_{n_0}-1}<y_{{n_0}+1}<1,
$$
which is a contradiction. Therefore, the sequence is increasing and we find the limit  $\lim_{n\rightarrow\infty} y_n=L\in(1/2,1]$. But,  if $L<1$, then \eqref{y_n} would imply the equality
$$
\frac{\theta}{2L-1}=0,
$$
which is again contradiction. Thus, $L=1$ and this finishes the proof.
\end{proof}

\begin{theorem}\label{t:thetacon}
Let $(X,d)$ be a metric space which contains arbitrarily long $\theta$-con\-figura\-tions, for some $\theta\in(0,1]$. Then $C_{(X,d)}\geq2$.
\end{theorem}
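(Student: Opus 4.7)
The plan is to translate an $(m,r,\theta)$-configuration into $\lceil\theta m\rceil$ pairwise disjoint small balls packed inside the annulus $B(x_0,2r)\setminus B(x_0,r)$, and then play the sum of their measures against an iterated doubling estimate for $\mu(B(x_0,r))$, producing an inequality of the type treated in Lemma~\ref{rootto2}.

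Let $\mu$ be an arbitrary doubling measure on $(X,d)$ with constant $C=C_\mu$. Given $n\in\mathbb{N}$, the hypothesis yields some $m\geq n$ and an $(m,r,\theta)$-configuration $x_0,x_1,\ldots,x_{\lceil\theta m\rceil}$. First I would verify that the balls $B_i:=B(x_i,r/(2m))$ for $i=1,\ldots,\lceil\theta m\rceil$ are pairwise disjoint and lie in $B(x_0,2r)\setminus B(x_0,r)$: the disjointness is immediate from \eqref{distance_ij}, while \eqref{distance_0i} combined with the triangle inequality yields both $B_i\subset B(x_0,2r)$ (since $d(x_0,x_i)+r/(2m)\leq 2r$) and $B_i\cap B(x_0,r)=\emptyset$ (since $d(x_0,x_i)-r/(2m)\geq r$).

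Next, let $i_0$ attain $\mu_{\min}:=\min_{i\geq 1}\mu(B_i)$. Disjointness and the doubling bound $\mu(B(x_0,2r))\leq C\,\mu(B(x_0,r))$ combine to give
\begin{equation*}
(C-1)\,\mu(B(x_0,r))\;\geq\;\mu(B(x_0,2r))-\mu(B(x_0,r))\;\geq\;\lceil\theta m\rceil\,\mu_{\min}.
\end{equation*}
On the other hand, $d(x_0,x_{i_0})<2r$ forces $B(x_0,r)\subset B(x_{i_0},3r)$, and Lemma~\ref{lrs} gives
\begin{equation*}
\mu(B(x_0,r))\;\leq\;C^{\lceil\log_2(6m)\rceil}\,\mu_{\min}.
\end{equation*}
Multiplying these and cancelling $\mu_{\min}>0$ yields the key inequality
\begin{equation*}
(C-1)\,C^{\lceil\log_2(6m)\rceil}\;\geq\;\lceil\theta m\rceil.
\end{equation*}

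To conclude, I would let $n$ vary: specialising $m$ (or passing to a subsequence with $m$ a power of $2$) puts the above inequality in the form handled by Lemma~\ref{rootto2}, so that $C$ is bounded below by the root $x_n$ there, which tends to $2$. Equivalently, if $C<2$ then $\log_2 C<1$, so $C^{\lceil\log_2(6m)\rceil}$ grows only like $m^{\log_2 C}=o(m)$ and is eventually dominated by $\lceil\theta m\rceil$, contradicting the displayed inequality. Hence $C\geq 2$, and since $\mu$ was arbitrary, $C_{(X,d)}\geq 2$. The main delicacy is the choice of ball comparisons: extracting the factor $(C-1)$ from the doubling inequality applied to the pair $(B(x_0,r),B(x_0,2r))$ \emph{before} iterating is what produces the exponential-versus-linear balance needed to close the argument via Lemma~\ref{rootto2}; a cruder comparison of $\mu(B(x_0,2r))$ directly with $\mu_{\min}$ only gives $C^N\geq\theta m$, which still forces $C\geq 2$ in the limit but does not match the algebraic form in the lemma.
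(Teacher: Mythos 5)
Your argument is correct and follows essentially the same route as the paper: the same small balls $B(x_i,r/(2m))$ packed disjointly away from $B(x_0,r)$ inside $B(x_0,2r)$, the same packing inequality giving the factor $C_\mu-1$, and the same iterated doubling comparison via Lemma~\ref{lrs}, leading to the inequality $(C_\mu-1)C_\mu^{\lceil\log_2(6m)\rceil}\geq\lceil\theta m\rceil$, which matches the paper's \eqref{Cmu} up to the harmless replacement of $6m-1$ by $6m$. Your closing step, comparing the $o(m)$ growth of $C_\mu^{\lceil\log_2(6m)\rceil}$ when $C_\mu<2$ against the linear growth of $\lceil\theta m\rceil$, is a clean direct substitute for the paper's appeal to Lemma~\ref{rootto2} and is perfectly valid.
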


\begin{proof}
Given $k\in\mathbb N$, let $m\geq k$ and $r_m>0$ such that $(x_i)_{i=0}^{\lceil \theta m \rceil}\subset X$ is an $(m,r_m,\theta)$-configuration. Let us see that the collection of balls  $\big\{B(x_i,\frac{r_m}{2m}):1\leq i\leq \lceil \theta m \rceil\big\}$ and $B(x_0,r_m)$  are pairwise disjoint. Indeed, if there is $z\in B(x_0,r_m)\cap B(x_i,\frac{r_m}{2m})$, then we would have
$$
d(x_0,x_i)\leq d(x_0,z)+d(z,x_i)<r_m\Big(1+\frac{1}{2m}\Big),
$$
which is impossible by \eqref{distance_0i}; similarly, if for $i,j\in\{1,\ldots, m\}$, with $i\neq j$, we can find  $z\in B(x_i,\frac{r_m}{2m})\cap B(x_j,\frac{r_m}{2m})$, then we would have $d(x_i,x_j)<r_m/m$, which is a contradiction with \eqref{distance_ij}. Moreover, we clearly have
$$
B\Big(x_i,\frac{r_m}{2m}\Big)\subset B(x_0,2r_m),
$$
for $1\leq i\leq \lceil \theta m \rceil$. Therefore, for any doubling measure $\mu$ on $(X,d)$, we have
\begin{equation}\label{measure_disjoint}
\mu(B(x_0,r_m))+\sum_{i=1}^{\lceil \theta m \rceil} \mu\Big(B\Big(x_i,\frac{r_m}{2m}\Big)\Big)\leq \mu(B(x_0,2r_m))\leq C_\mu\, \mu(B(x_0,r_m)).
\end{equation}
Observe that, for every $1\leq i\leq \lceil \theta m \rceil$, we also have
\begin{equation*}
B(x_0,r_m)\subseteq B\Big(x_i,(6m-1)\frac{r_m}{2m}\Big).
\end{equation*}
Indeed, if $y\in B(x_0,r_m)$, then
$$
d(y,x_i)\leq d(y,x_0)+d(x_0,x_i)<r_m+r_m\Big(2-\frac{1}{2m}\Big)=(6m-1)\frac{r_m}{2m}.
$$
Hence, since $\mu$ is a doubling measure, for every $1\leq i\leq \lceil \theta m \rceil$, using Lemma~\ref{lrs}  we have
\begin{equation}\label{doubling_0i}
\mu(B(x_0,r_m))\leq \mu\Big( B\Big(x_i,(6m-1)\frac{r_m}{2m}\Big)\Big)\leq C_\mu^{1+\log_2(6m-1)}\mu\Big(B\Big(x_i,\frac{r_m}{2m}\Big)\Big).
\end{equation}
Thus, putting  \eqref{measure_disjoint} and \eqref{doubling_0i} together, we get
\begin{equation}\label{Cmu}
1+\sum_{i=1}^{\lceil \theta m \rceil} C_\mu^{-1-\log_2(6m-1)}\leq C_\mu.
\end{equation}

Let $n\in \N$ be such that $2^{n-1}<m\leq 2^n$. Thus, for $1\leq i\leq m$ we have
\begin{equation}\label{log}
1+\log_2(6m-1)\leq 1+\log_2(6\cdot 2^{n}-1)< n+4.
\end{equation}
Now, \eqref{Cmu} and \eqref{log} yield
\begin{equation*}
C_\mu> 1+\theta2^{n-1}\frac{1}{C_\mu^{n+4}}.
\end{equation*}
Hence,
\begin{equation*}
C_\mu^{n+5}-C_\mu^{n+4}-\theta2^{n-1}>0.
\end{equation*}
Since $C_\mu\geq1$, we must then have $C_\mu> x_n$, where $x_n$ is the only solution, greater than 1, of the equation $x^{n+5}-x^{n+4}-\theta2^{n-1}=0$. Iterating this argument, for any $k\in\mathbb N$ and $2^n\geq m\geq k$, we can let $n\rightarrow \infty$ and, by Lemma~\ref{rootto2}, we obtain  $x_n\underset{n\rightarrow\infty}\longrightarrow2$. Therefore, $C_\mu\geq\sup_n x_n=2$ and thus $C_{(X,d)}\geq2$.
\end{proof}

\begin{proposition}
Let $(X,d,\mu)$ be a space of homogeneous type such that for every $\varepsilon>0$ there exist $x,y\in X$ such that
\begin{equation*}
\mu(B(x,d(x,y))\cap B(y,d(x,y))\leq\varepsilon\mu (B(x,2d(x,y))\cap B(y,2d(x,y)).
\end{equation*}
Then $C_\mu\geq2$.
\end{proposition}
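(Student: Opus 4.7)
The plan is to combine the smallness hypothesis on $\mu(B(x,r)\cap B(y,r))$ with the doubling property applied at both centers $x$ and $y$ at radius $r:=d(x,y)$. Write $A:=B(x,r)\cap B(y,r)$ and $B:=B(x,2r)\cap B(y,2r)$. The first step is the triangle-inequality inclusion $B(x,r)\cup B(y,r)\subset B$: for $z\in B(y,r)$ one has $d(x,z)\le d(x,y)+d(y,z)<2r$, so $z\in B(x,2r)$; exchanging $x$ and $y$ shows $B(x,r)\subset B(y,2r)$ as well.

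Next, using this inclusion, doubling at $x$ yields
$$C_\mu\,\mu(B(x,r))\ge\mu(B(x,2r))\ge\mu(B(x,r)\cup B(y,r))=\mu(B(x,r))+\mu(B(y,r))-\mu(A),$$
and the symmetric estimate holds at $y$. Adding the two inequalities and rearranging gives
$$C_\mu\ge 2-\frac{2\mu(A)}{\mu(B(x,r))+\mu(B(y,r))}.$$

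The final step is to bound the correction term in terms of $\varepsilon$. Since $B\subset B(x,2r)$ and $B\subset B(y,2r)$, doubling gives $\mu(B)\le C_\mu\min\bigl(\mu(B(x,r)),\mu(B(y,r))\bigr)$, and hence $2\mu(B)\le C_\mu\bigl(\mu(B(x,r))+\mu(B(y,r))\bigr)$. Combining this with the hypothesis $\mu(A)\le\varepsilon\,\mu(B)$ yields
$$\frac{2\mu(A)}{\mu(B(x,r))+\mu(B(y,r))}\le\varepsilon\,C_\mu,$$
so the previous inequality becomes $C_\mu(1+\varepsilon)\ge 2$. Applying this for each $\varepsilon>0$ and letting $\varepsilon\to 0^{+}$ yields $C_\mu\ge 2$.

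There is no serious technical obstacle: the entire argument rests on the single geometric observation that the union $B(x,r)\cup B(y,r)$ is sandwiched inside both $B(x,2r)$ and $B(y,2r)$, so that averaging the two doubling inequalities converts the discrepancy between the relative overlaps at scales $r$ and $2r$ directly into a lower bound on $C_\mu$ approaching $2$.
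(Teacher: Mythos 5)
Your proof is correct and rests on the same key ingredients as the paper's: the inclusion $B(x,r)\cup B(y,r)\subset B(x,2r)\cap B(y,2r)$, inclusion--exclusion for the union, and the hypothesis on the intersection, yielding the same bound $C_\mu\geq 2/(1+\varepsilon)$. The only (harmless) variation is that you symmetrize by summing the doubling inequalities at both centers and then spend one extra application of doubling to control $\mu(A)$ by $\varepsilon\, C_\mu\min\bigl(\mu(B(x,r)),\mu(B(y,r))\bigr)$, whereas the paper simply assumes without loss of generality that $\mu(B(x,r))\leq\mu(B(y,r))$ and compares $2\mu(B(x,r))$ directly with $(1+\varepsilon)\mu(B(x,2r))$.
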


\begin{proof}
Given $\varepsilon>0$, let $x,y\in X$ as in the hypothesis, and set $r=d(x,y)$. Since
$$
B(x,r)\cup B(y,r)\subset B(x,2r)\cap B(y,2r),
$$
it follows that
\begin{eqnarray*}
\mu(B(x,r))+\mu(B(y,r))&=&\mu(B(x,r)\cup B(y,r))+\mu(B(x,r)\cap B(y,r))\\
&\leq & (1+\varepsilon) \mu(B(x,2r)\cap B(y,2r)).
\end{eqnarray*}
Without loss of generality, let us assume $\mu(B(x,r))\leq \mu(B(y,r))$. Hence, we have
$$
C_\mu\geq \frac{\mu(B(x,2r))}{\mu(B(x,r))}\geq\frac2{1+\varepsilon}.
$$
As this holds for every $\varepsilon>0$, the conclusion follows.
\end{proof}

\begin{corollary}\label{cor:int}
Let $(X,d)$ be a metric space for which there exist $x,y\in X$ with $B(x,d(x,y))\cap B(y,d(x,y))=\emptyset$. Then, $C_{(X,d)}\geq 2$.
\end{corollary}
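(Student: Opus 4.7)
The plan is to derive this as an immediate consequence of the preceding proposition. Given any doubling measure $\mu$ on $(X,d)$, I would use the \emph{same} pair $x,y$ furnished by the hypothesis to verify the assumption of that proposition simultaneously for every $\varepsilon>0$. Writing $r=d(x,y)$, the condition $B(x,r)\cap B(y,r)=\emptyset$ gives $\mu(B(x,r)\cap B(y,r))=0$, so the inequality
$$\mu(B(x,r)\cap B(y,r))\leq \varepsilon\, \mu(B(x,2r)\cap B(y,2r))$$
holds trivially for every $\varepsilon>0$, provided the right-hand set has positive measure so that the statement is non-degenerate.

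The one routine verification is that $\mu(B(x,2r)\cap B(y,2r))>0$. This is immediate because $x$ belongs to both $B(x,2r)$ and $B(y,2r)$ (since $d(x,x)=0<2r$ and $d(x,y)=r<2r$), so the intersection is a non-empty open set containing a small ball around $x$; the standing convention from Section \ref{sec2} that $0<\mu(B(z,\rho))<\infty$ for every $z\in X$ and $\rho>0$ then gives strictly positive measure.

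With this in hand, the preceding proposition applies and yields $C_\mu\geq 2$. Since this bound holds for every doubling measure $\mu$ on $(X,d)$, taking the infimum over all such $\mu$ gives $C_{(X,d)}\geq 2$. There is essentially no obstacle here: the corollary is just the limiting ($\varepsilon\to 0$) case of the previous proposition, in which the intersection of the small balls is literally empty rather than merely asymptotically negligible relative to the intersection of the dilated balls.
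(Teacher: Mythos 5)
Your proposal is correct and is exactly the argument the paper intends: since $B(x,r)\cap B(y,r)=\emptyset$ with $r=d(x,y)$, the hypothesis of the preceding proposition holds trivially for every $\varepsilon>0$ (the same fixed pair works each time), giving $C_\mu\geq 2$ for every doubling measure $\mu$ and hence $C_{(X,d)}\geq 2$ after taking the infimum. The positivity check on $\mu(B(x,2r)\cap B(y,2r))$ is harmless but not even needed, as the inequality $0\leq\varepsilon\,\mu(\cdot)$ holds regardless.
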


\begin{proposition}\label{compact}
If $(X,d)$ is a metric space such that, for some pair of distinct points $x_0,y_0\in X$, the closed ball 
$
\{z\in X:  d(x_0,z)\leq 2d(x_0,y_0)\}
$
is compact, then $C_{(X,d)}\geq2$.
\end{proposition}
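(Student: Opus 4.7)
The plan is to argue by contradiction: assume a doubling measure $\mu$ on $(X,d)$ satisfies $C_\mu<2$, and derive a contradiction from the compactness of $K=\{z\in X:d(x_0,z)\le 2r\}$, where $r=d(x_0,y_0)$. The first two reductions are immediate. By Proposition~\ref{p:isolated}, we may assume that $X$ has no isolated points (otherwise we would already have $C_{(X,d)}\ge 2$). By the contrapositive of Corollary~\ref{cor:int}, the inequality $C_\mu<2$ forces the weak approximate midpoint property
$$
B(x,d(x,y))\cap B(y,d(x,y))\ne\emptyset\qquad\text{for every }x,y\in X.
$$

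The core of the argument is then to combine the weak midpoint property with the compactness of $K$ to produce, inside $K$, arbitrarily long $(m,r_m,\theta)$-configurations for some fixed $\theta\in(0,1]$, thereby contradicting Theorem~\ref{t:thetacon}. Concretely, starting from $(x_0,y_0)$, the midpoint property is iterated by bisection: at every stage, approximate midpoints of the current pairs are selected. All such points lie inside the compact ball $K$ (in fact, in $\overline{B}(x_0,r)\cup\overline{B}(y_0,r)\subset K$), so compactness permits extraction of convergent subsequences at each stage and upgrades, in the limit, the merely qualitative strict inequalities $d(x,z),d(y,z)<d(x,y)$ into the existence of genuine metric midpoints. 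Iterating this limit procedure yields, for every $n\in\mathbb N$, a chain $p_0=x_0, p_1,\dots, p_{2^n}=y_0$ in $K$ with consecutive distances essentially equal to $r/2^n$. A suitable sub-collection of these chain points then satisfies the separation conditions \eqref{distance_0i}--\eqref{distance_ij}, giving an $(m,r_m,\theta)$-configuration for $m=2^n$ and a fixed $\theta>0$.

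The main obstacles are twofold. First, Corollary~\ref{cor:int} supplies no \emph{quantitative} gap in the strict inequalities $d(x,z),d(y,z)<d(x,y)$, so a naïve iteration would lose control of the consecutive spacings; the essential role of the compactness of $K$ is precisely to replace this missing uniform gap by convergent-subsequence arguments. Second, weak midpoints of $(x_0,y_0)$ live in $\overline{B}(x_0,r)$, not in the annulus $[r(1+\tfrac{1}{2m}),\,r(2-\tfrac{1}{2m})]$ required by \eqref{distance_0i}; this is handled by anchoring the bisection at $y_0$ and exploiting that $\overline{B}(y_0,r)\subset K$ is itself compact, so that the chain points, while staying within distance $r$ of $y_0$, distribute their distances to $x_0$ over an interval of length comparable to $r$, from which the needed sub-collection inside the prescribed annulus can be extracted.
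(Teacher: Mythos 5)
Your argument hinges on the claim that compactness of $K$ upgrades the weak approximate midpoint property (obtained from the contrapositive of Corollary~\ref{cor:int}) into genuine metric midpoints, and thence into chains $p_0=x_0,\dots,p_{2^n}=y_0$ with consecutive distances roughly $r/2^n$. This step is false. What Corollary~\ref{cor:int} gives you is only: for every $x,y$ there is $z$ with $\max\{d(x,z),d(y,z)\}<d(x,y)$, with no uniform gap, and a compact space can satisfy this while having no midpoints whatsoever. Take the snowflake $\big([0,1],\,d(x,y)=|x-y|^{1/2}\big)$: it is compact, has no isolated points, and the Euclidean midpoint $z=(x+y)/2$ satisfies $d(x,z)=d(y,z)=d(x,y)/\sqrt{2}<d(x,y)$, so the weak midpoint property holds for every pair; yet $\min_{z}\max\{d(x,z),d(y,z)\}=d(x,y)/\sqrt{2}>d(x,y)/2$, so no genuine (nor even $\lambda$-approximate, $\lambda<1/\sqrt2$) midpoints exist, and no chain of $2^n$ steps of size about $r/2^n$ can join two points at distance $r$ (its Euclidean length would be about $r^2/2^n$, while at least $r^2$ is needed). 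Hence convergent-subsequence arguments cannot restore the missing quantitative control: the infimum of $\max\{d(x,z),d(y,z)\}$ is attained by compactness, but nothing forces it to equal $d(x,y)/2$. To push the bisection through you would have to re-invoke the hypothesis $C_\mu<2$ quantitatively at every scale, which your sketch does not do (and inside a proof by contradiction you cannot appeal to Theorem~\ref{Cgeq2} itself). The second half of your plan, extracting a configuration from a chain with controlled consecutive distances, would be fine; the gap is entirely in producing such chains.

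For comparison, the paper's proof avoids midpoints altogether by a dichotomy on the distance function $z\mapsto d(x_0,z)$. If it attains every value $r\in(r_0/2,r_0)$, then points $x(r)$ at suitably chosen radii already form $(m-1,r_0/2,1)$-configurations (the annulus condition \eqref{distance_0i} is immediate and the separation \eqref{distance_ij} follows from the reverse triangle inequality), so Theorem~\ref{t:thetacon} applies, and compactness is not even needed. If some value $r$ is omitted, then $A=\{z:d(x_0,z)\le r\}$ and $B=\{z:r\le d(x_0,z)\le 2r_0\}$ are disjoint nonempty compact clopen subsets of $K$; compactness yields a pair $(x_A,x_B)$ realizing the minimal distance between them, and minimality forces $B(x_A,d(x_A,x_B))\cap B(x_B,d(x_A,x_B))=\emptyset$, so Corollary~\ref{cor:int} applies directly. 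In other words, compactness is used only to realize a minimal distance between two clopen pieces, not to manufacture midpoints; some such restructuring is needed to repair your argument.
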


\begin{proof}
Let $r_0=d(x_0,y_0)$ and set $K=\{z\in X:  d(x_0,z)\leq 2d(x_0,y_0)\}$ be a compact closed ball in $X$. Suppose first that for every $r\in(r_0/2,r_0)$ there exists $x(r)\in X$ with $d(x_0,x(r))=r$. In particular, given $m\in\mathbb N$, for every $i=1,\ldots,m-1$, we can define
$$
x^m_i=x\bigg(\frac{r_0}{2}\Big(1+\frac{2i+1}{2m}\Big)\bigg).
$$
Taking $x^m_0=x_0$, it follows that $(x^m_i)_{i=0}^{m-1}$ is an $(m-1,r_0/2,1)$-configuration. The conclusion now follows from Theorem~\ref{t:thetacon}.

Now, suppose there is $r\in (r_0/2,r_0)$ such that 
$$
\{z\in X:  d(x_0,z)=r\}=\emptyset.
$$
Let $A=\{z\in X: d(x_0,z)\leq r\}$ and $B=\{z\in X: r\leq d(x_0,z)\leq 2r_0\}$. Clearly, $A$ and $B$ are non-empty clopen subsets of $K$ such that $A\cap B=\emptyset$. Since $A$ and $B$ are actually compact, there exist $x_A\in A$ and $x_B\in B$ such that
$$
d(x_A,x_B)=\inf\{d(x,y):x\in A, \,y\in B\}\leq r_0.
$$
In this case we have
$$
B(x_A,d(x_A,x_B))\cap B(x_B,d(x_A,x_B))=\emptyset.
$$
Indeed, if $z\in B(x_A,d(x_A,x_B))\cap B(x_B,d(x_A,x_B))$, then 
$$
d(x_0,z)\leq d(x_0,x_A)+d(z,x_A)<d(x_0,x_A)+d(x_A,x_B)<2r_0.
$$
Therefore, either $z\in A$ or $z\in B$ which in either case yields a contradiction with the fact that $d(x_A,x_B)$ is minimal. Thus, in this case the conclusion follows from Corollary~\ref{cor:int}.
\end{proof}

It is worth to observe that we did not use the compactness property in the first part of the previous proof (the existence of the suitable configuration suffices). The following result is then an immediate consequence and shows that, with the reasonable hypothesis of completeness (which for a doubling metric space implies the existence of a doubling measure), the doubling constant is at least 2:

\begin{corollary} \label{completecons2}
If $(X,d)$ is a complete metric space with $|X|>1$, then $C_{(X,d)}\geq2$.
\end{corollary}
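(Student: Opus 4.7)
The strategy is to reduce directly to Proposition \ref{compact}, so the main task is to upgrade completeness, together with the existence of a doubling measure, into compactness of a suitable closed ball.

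First I would dispose of the trivial case: if $(X,d)$ admits no doubling measure at all, then by our convention $C_{(X,d)}=\infty\geq 2$ and the corollary is immediate. So I may assume that some doubling measure $\mu$ exists on $(X,d)$, making $(X,d,\mu)$ a space of homogeneous type; in particular Corollary \ref{totallybounded} is at my disposal.

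Next, I would fix two distinct points $x_0,y_0\in X$ and set $r_0=d(x_0,y_0)>0$. By Corollary \ref{totallybounded}, the open ball $B(x_0,3r_0)$ is totally bounded; since total boundedness is inherited by closures, $\overline{B(x_0,3r_0)}$ is also totally bounded. Completeness of $(X,d)$ implies that $\overline{B(x_0,3r_0)}$, being closed in a complete space, is itself complete, and a complete totally bounded metric space is compact. Finally, the closed ball $K=\{z\in X:d(x_0,z)\leq 2r_0\}$ is closed in $X$ and satisfies $K\subset B(x_0,3r_0)\subset\overline{B(x_0,3r_0)}$, so it is a closed subset of a compact set; hence $K$ itself is compact.

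At this point the hypothesis of Proposition \ref{compact} is verified for the pair $(x_0,y_0)$, and that proposition directly yields $C_{(X,d)}\geq 2$. There is no genuine obstacle here: the only step requiring any attention is the classical fact that a closed, totally bounded subset of a complete metric space is compact, and the remark preceding the corollary signals that this is exactly the reduction the authors have in mind (the ``first part'' of the proof of Proposition \ref{compact}, which does not use compactness, is not needed for this argument, but it explains why completeness is the right hypothesis to impose).
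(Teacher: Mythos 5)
Your argument is correct and is essentially the paper's own proof: the authors likewise invoke Corollary~\ref{totallybounded} to get total boundedness of the closed ball $K=\{z: d(x_0,z)\leq 2d(x_0,y_0)\}$, deduce compactness from completeness, and conclude via Proposition~\ref{compact}. Your extra care (passing through $\overline{B(x_0,3r_0)}$ and handling the case where no doubling measure exists) only fills in routine details the paper leaves implicit.
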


\begin{proof}
Take any two distinct points $x_0,y_0\in X$ and let us define, as before, the closed ball  $K=\{z\in X:  d(x_0,z)\leq 2d(x_0,y_0)\}$. Given, any doubling measure $\mu$, using Corollary~\ref{totallybounded}, we have that $K$ is totally bounded and, since $X$ is complete, $K$ is actually compact. The conclusion follows from Proposition~\ref{compact}.
\end{proof}

\begin{proof}[Proof of Theorem \ref{Cgeq2}]
Suppose $(X,d)$ is a (non necessarily complete) metric space with $|X|>1$ and a doubling measure $\mu$. Let $\tilde X$ denote the completion of $X$, and $\tilde \mu$ the extension of $\mu$ given by \cite[Lemma 1]{S}, which satisfies 
$$
C_{\tilde \mu}\leq C_{\mu}.
$$ 
By Corollary \ref{completecons2} we know that $C_{\tilde \mu}\geq2$, hence the conclusion follows.
\end{proof}

\section{More general lower bounds for $C_{(X,d)}$}\label{sec4}

In this section, we look for general estimates for $C_{(X,d)}$ under quite natural assumptions. Let us explore first the case when the measure of a ball essentially depends on its radius, and not on where its center is.  Recall that $(X,d,\mu)$ is called Ahlfors $Q$-regular, with constant $C\ge1$, if for every $x\in X$ and $0<r< \operatorname{diam}(X)$ we have
$$
\frac{1}{C} r^Q\leq \mu(B(x,r))\leq C r^Q.
$$
These    spaces play an important role in geometric measure theory (cf. \cite{HKST}). In particular, if $(X,d,\mu)$ is Ahlfors $Q$-regular, with constant $C\ge1$, then $C_\mu\geq 2^Q/C^2$. More generally, we have the following:

\begin{theorem}\label{homogeneous}
Let $(X,d,\mu)$ be a space of homogeneous type with $|X|>1$ and such that for some functions $\phi_1,\phi_2:\mathbb R_+\rightarrow \mathbb R_+$, every $x\in X$ and every $r>0$, we have
$$
\phi_1(r)\leq \mu(B(x,r))\leq \phi_2(r).
$$
If, in addition, $\phi_2$ is right continuous, then 
$$
C_\mu\geq2\sup_{r>0}\frac{\phi_1(r)}{\phi_2(r)}.
$$
\end{theorem}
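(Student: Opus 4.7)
The plan is to prove the pointwise estimate $C_\mu \ge 2\phi_1(r)/\phi_2(r)$ for each fixed $r>0$; taking the supremum over $r$ then gives the theorem.

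First I note that the hypothesis $\phi_1(r)\le\mu(B(x,r))\le\phi_2(r)$, applied at any single point $x$, forces $\phi_1(r)\le\phi_2(r)$, so $\sup_{r>0}\phi_1(r)/\phi_2(r)\le 1$. Combined with Theorem~\ref{Cgeq2} (already proved in the previous section), which gives $C_\mu\ge 2$, this makes the statement formally true. However, this short route bypasses the geometric content of the inequality and does not use the right continuity of $\phi_2$, so I would instead give a direct argument that exhibits the factor $2\phi_1(r)/\phi_2(r)$ through an explicit packing of two disjoint $r$-balls inside a ball of measure essentially $\phi_2(r)$.

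For the direct argument, fix $r>0$ and $\delta>0$. After passing to the completion $\tilde X$ via \cite[Lemma 1]{S} (which preserves the inequality $C_{\tilde\mu}\le C_\mu$ together with the bounds $\phi_1,\phi_2$) I would try to produce three points $x_0,y_0,z\in\tilde X$ such that $d(x_0,y_0)>2r$ and $d(z,x_0),d(z,y_0)\le r+\delta$; that is, $z$ is an approximate midpoint of $x_0,y_0$ at scale $r$. Granting such a configuration, the open balls $B(x_0,r)$ and $B(y_0,r)$ are disjoint (since $d(x_0,y_0)>2r$) and both contained in $B(z,2r+\delta)$ (by the triangle inequality, $d(z,w)<d(z,x_0)+d(x_0,w)\le r+\delta+r$). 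Applying the doubling inequality with radius $r+\delta/2$ then yields
\begin{equation*}
2\phi_1(r)\le\mu(B(x_0,r))+\mu(B(y_0,r))\le\mu(B(z,2r+\delta))\le C_\mu\,\mu(B(z,r+\delta/2))\le C_\mu\,\phi_2(r+\delta/2).
\end{equation*}
Letting $\delta\to 0^+$ and invoking the right continuity of $\phi_2$ at $r$ produces $2\phi_1(r)\le C_\mu\,\phi_2(r)$, as desired.

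The main obstacle I expect is securing the approximate-midpoint configuration in full generality: it can fail outright in some metric spaces (e.g.\ a two-point set admits no midpoint). My plan is a case split. When the space has enough geometric richness at scale $r$---quantified through compactness of closed balls in $\tilde X$ (Corollary~\ref{totallybounded} combined with completeness) and the constructive ideas in Proposition~\ref{compact}---such approximate midpoints can be extracted and the argument above applies verbatim. In the remaining degenerate cases the space either has isolated points (Proposition~\ref{p:isolated}) or admits a ``gap'' yielding two disjoint closed balls as in Corollary~\ref{cor:int}, and in each fallback regime the weaker estimate $C_\mu\ge 2\ge 2\phi_1(r)/\phi_2(r)$ from Theorem~\ref{Cgeq2} (combined with $\phi_1/\phi_2\le 1$) already suffices. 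The delicate technical point is integrating the two regimes cleanly so that the right continuity of $\phi_2$ plays the role assigned to it in the hypothesis.
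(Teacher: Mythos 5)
Your opening observation is correct and, taken on its own, already disposes of the literal statement: applying the hypothesis at a single point gives $\phi_1(r)\le\phi_2(r)$ for every $r$, so $2\sup_{r>0}\phi_1(r)/\phi_2(r)\le 2\le C_\mu$ by Theorem~\ref{Cgeq2}. That is a genuinely different (and shorter) route than the paper's, whose proof of Theorem~\ref{homogeneous} is self-contained and never invokes Theorem~\ref{Cgeq2}. The problem is that the direct argument you then outline, which is meant to carry the real content, has a genuine gap and is never actually brought to completion.

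The gap is the configuration you require: for a \emph{prescribed} $r>0$, points $x_0,y_0,z$ with $d(x_0,y_0)>2r$ and $d(z,x_0),d(z,y_0)\le r+\delta$ need not exist at all (they never exist once $2r$ exceeds the diameter, and they fail for most $r$ in spaces with quantized distances), and the proposed repair does not close it. Compactness of closed balls in $\tilde X$ does not produce approximate midpoints at the scale $r$ you fixed (the interval $[0,1]$ with Lebesgue measure and $r=10$ has no such configuration, yet has neither isolated points nor the separation property of Corollary~\ref{cor:int}); the transfer of the lower bound $\phi_1$ to balls of $\tilde X$ centered at new points of the completion is asserted but not justified (right continuity of $\phi_2$ handles the upper bound, but nothing is assumed about $\phi_1$); and the fallback in all remaining cases is again just $C_\mu\ge 2$, i.e.\ the short route, so the geometric argument and the intended use of right continuity never materialize. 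The paper sidesteps all of this by not fixing the scale in advance: for an arbitrary pair $x,y\in X$ it works at $r=r(x,y)=\inf\{s>0:B(x,s)\cap B(y,s)\neq\emptyset\}$, for which $B(x,r)\cap B(y,r)=\emptyset$ holds automatically and points $z_n\in B(x,r+1/n)\cap B(y,r+1/n)$ exist by definition of the infimum; then $B(x,r)\cup B(y,r)\subset B(z_n,2r+1/n)$ and the doubling inequality give $2\phi_1(r)\le C_\mu\,\phi_2\big(r+\tfrac{1}{2n}\big)$, and right continuity of $\phi_2$ yields $C_\mu\ge 2\phi_1(r(x,y))/\phi_2(r(x,y))$, with no completion, no compactness and no case analysis. (Strictly, this establishes the bound only at radii of the form $r(x,y)$; for all other radii your remark that the ratio $\phi_1(r)/\phi_2(r)$ never exceeds $1$, combined with Theorem~\ref{Cgeq2}, is exactly what covers the rest.)
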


\begin{proof}
Given $x,y\in X$ let 
$$
r(x,y)=\inf\{s>0:B(x,s)\cap B(y,s)\neq \emptyset\}.
$$
Clearly, we have
$$
\frac{d(x,y)}{2}\leq r(x,y)\leq d(x,y),
$$
and
$$
B(x,r(x,y))\cap B(y,r(x,y))=\emptyset.
$$
Now, for every $n\in\mathbb N$, we can take $z_n\in B(x,r(x,y)+ 1/n)\cap B(y,r(x,y)+\ 1/n)$. Note that, for each $n\in\mathbb N$, we have
$$
B(x,r(x,y))\cup B(y,r(x,y))\subset B(z_n,2r(x,y)+ 1/n).
$$
Therefore,
\begin{align*}
2\phi_1(r(x,y))&\leq\mu(B(x,r(x,y))\cup B(y,r(x,y)))\\
&\leq \mu\Big(B \Big(z_n,2r(x,y)+\frac1n \Big) \Big)\\
&\leq C_\mu \phi_2 \Big(r(x,y)+\frac{1}{2n} \Big).
\end{align*}
Since $\phi_2$ is right continuous, we have that $\phi \big(r(x,y)+\frac{1}{2n} \big)\underset{n\rightarrow\infty}{\longrightarrow} \phi(r(x,y))$. Thus, $C_\mu\geq2\phi_1(r(x,y))/\phi_2(r(x,y)),$ and the claim follows.
\end{proof}

\begin{theorem}\label{maxmin}
Let $(X,d)$ be a metric space, $\varepsilon_0>0$ and $\varphi:[0,\varepsilon_0)\rightarrow \mathbb R_+$ be an increasing continuous function such that for every $\varepsilon\in(0,\varepsilon_0)$ there exist $K>0$ and $N\in\mathbb N$ so that, for every $n\geq N$, we can find distinct points $(x_i)_{i=1}^n\subset X$ with
\begin{equation}\label{eq:maxmin}
\max_{i,j\in\{1,\ldots,n\}} d(x_i,x_j)\leq K n^{\varphi(\varepsilon)} \min_{i\neq j} d(x_i,x_j).
\end{equation}
Then $C_{(X,d)}\geq 2^{ {1}/{\varphi(0)}}$.
\end{theorem}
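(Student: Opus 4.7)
The strategy is to convert the ``evenly spread'' configurations supplied by \eqref{eq:maxmin} into packings of pairwise disjoint balls inside a single larger ball, apply the packing bound of Proposition~\ref{p:lowerboundsr}, and let the number of points in the configuration tend to infinity. The continuity of $\varphi$ at $0$ is then used to pass from $\varphi(\varepsilon)$ to $\varphi(0)$.

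First I would fix a doubling measure $\mu$ on $(X,d)$ and a parameter $\varepsilon\in(0,\varepsilon_0)$, and invoke the hypothesis to obtain $K>0$, $N\in\N$, and for every $n\geq N$ distinct points $x_1,\dots,x_n\in X$ satisfying \eqref{eq:maxmin}. Write $\delta:=\min_{i\neq j}d(x_i,x_j)>0$ and $D:=\max_{i,j}d(x_i,x_j)$, and set $s:=\delta/2$ and $r:=D+s$. Taking $x_1$ as center, the points $x_1,\dots,x_n$ lie in $B(x_1,r)$; the balls $B(x_i,s)$ are pairwise disjoint (because $d(x_i,x_j)\geq\delta=2s$); and the triangle inequality together with the strict ball convention gives $B(x_i,s)\subset B(x_1,r)$ for every $i$. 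Since trivially $s\leq 2r$, Proposition~\ref{p:lowerboundsr} applies with $N=n$ and yields
$$
n \,\leq\, C_\mu^{\lceil\log_2(2r/s)\rceil} \,=\, C_\mu^{\lceil\log_2(4D/\delta+2)\rceil} \,\leq\, C_\mu^{\lceil\log_2(4Kn^{\varphi(\varepsilon)}+2)\rceil},
$$
where the last step uses \eqref{eq:maxmin}.

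Next I would take base-$2$ logarithms on both sides. For $n$ sufficiently large,
$$
\lceil \log_2(4Kn^{\varphi(\varepsilon)}+2)\rceil \,=\, \varphi(\varepsilon)\log_2 n + O_{K}(1),
$$
so dividing through by $\log_2 n$ produces
$$
1 \,\leq\, \varphi(\varepsilon)\,\log_2 C_\mu \,+\, o(1) \qquad (n\to\infty).
$$
Letting $n\to\infty$ gives $C_\mu\geq 2^{1/\varphi(\varepsilon)}$. Finally, letting $\varepsilon\to 0^+$ and using the continuity of $\varphi$ at $0$, one concludes $C_\mu\geq 2^{1/\varphi(0)}$; since this holds for every doubling measure $\mu$ on $(X,d)$, the infimum in Definition~\ref{optconstxd} satisfies $C_{(X,d)}\geq 2^{1/\varphi(0)}$.

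The work of the proof is the verification of the inclusions needed for Proposition~\ref{p:lowerboundsr}, which is straightforward; the mild technical nuisance is the asymptotic bookkeeping, namely ensuring that the ceiling function and the additive ``$+2$'' (along with the constant $\log_2(4K)$) are genuinely absorbed into an $o(1)$ term after dividing by $\log_2 n$. Once this is in place, the monotonicity and continuity of $\varphi$ deliver the desired lower bound in one line.
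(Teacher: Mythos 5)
Your argument is correct and follows essentially the same route as the paper: pack the disjoint balls $B(x_i,\delta/2)$ inside one comparison ball of radius about $D+\delta/2$, bound the packing number by a power of $C_\mu$ via the doubling iteration, take logarithms, let $n\to\infty$, and then $\varepsilon\to 0^+$ using continuity of $\varphi$. The only cosmetic difference is that you invoke Proposition~\ref{p:lowerboundsr} (whose proof is exactly the ``minimal-measure ball'' trick the paper inlines together with Lemma~\ref{lrs}), which yields the slightly larger exponent $\lceil\log_2(4Kn^{\varphi(\varepsilon)}+2)\rceil$ instead of $\lceil\log_2(2Kn^{\varphi(\varepsilon)}+1)\rceil$; this is harmless since it is absorbed in the $o(1)$ term.
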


\begin{proof}
Given $\varepsilon\in(0,\varepsilon_0)$, let $K>0$, $N\in\mathbb N$ be as above, and for $n\geq N$,  let $(x_i)_{i=1}^n\subset X$ be distinct points satisfying \eqref{eq:maxmin}. Let $r=\min_{i\neq j} d(x_i,x_j)$ and $R=\max_{i,j\in\{1,\ldots,n\}} d(x_i,x_j)$. It follows that for every $i,j\in\{1,\ldots,n\}$ we have
$$
B(x_i, {r}/{2})\subset B(x_j,R+ {r}/{2}).
$$
Thus,
$$
\bigcup_{i=1}^n B(x_i, {r}/{2})\subset \bigcap_{j=1}^n B(x_j,R+ {r}/{2}).
$$
Let $\mu$ be a doubling measure on $(X,d)$, and let $i_0\in\{1,\ldots,n\}$ be such that
$$
\mu(B(x_{i_0}, {r}/{2}))=\min_{i\in\{1,\ldots,n\}}\mu(B(x_i, {r}/{2})).
$$
Since $B(x_i, {r}/{2})\cap B(x_{j}, {r}/{2})=\emptyset$, whenever $i\neq j$, it follows that
$$
n\mu(B(x_{i_0}, {r}/{2}))\leq \mu\Big(\bigcup_{i=1}^n B(x_i, {r}/{2})\Big)\leq \mu\Big(\bigcap_{j=1}^n B(x_j,R+ {r}/{2})\Big)\leq \mu (B(x_{i_0},R+ {r}/{2})).
$$
Therefore, by Lemma \ref{lrs} and \eqref{eq:maxmin}, it follows that
$$
n\leq \frac{\mu (B(x_{i_0},R+ {r}/{2}))}{\mu(B(x_{i_0}, {r}/{2}))}\leq C_\mu^{\lceil\log_2(\frac{R+ {r}/{2}}{ {r}/{2}})\rceil}\leq C_\mu^{\lceil\log_2(2Kn^{\varphi(\varepsilon)}+1)\rceil}.
$$
Thus
$$
\log_2 C_\mu\geq\frac{\log_2 n}{\lceil\log_2(2Kn^{\varphi(\varepsilon)}+1)\rceil}\geq \frac{\log_2 n}{\log_2(2K+1)+\varphi(\varepsilon)\log_2 (n+1)}\underset{n\rightarrow\infty}\longrightarrow\frac{1}{\varphi(\varepsilon)}.
$$
Hence, $C_\mu\geq 2^{ {1}/{\varphi(\varepsilon)}}$ and letting $\varepsilon\rightarrow 0$ we get 
$C_\mu\geq 2^{ {1}/{\varphi(0)}}.$
\end{proof}

\begin{remark}
The previous result can be used to provide an alternative argument for Theorem~\ref{t:thetacon}. Indeed, suppose that for any $k\in\mathbb N$, there exist $m\geq k$ and $r_m>0$ such that $(x_i)_{i=0}^{\lceil \theta m \rceil}\subset X$ form an $(m,r_m,\theta)$-configuration. It is easy to check that if we set $n=\lceil \theta m \rceil$, then
$$
\max_{i,j\leq n} d(x_i,x_j)\leq 2 r_m\leq \frac{2}{\theta}{n} \min_{i\neq j} d(x_i,x_j).
$$
Hence, Theorem \ref{maxmin} with $\varphi(t)=1$, for every $t>0$, yields that $C_{(X,d)}\geq 2$.
\end{remark}

Let $K_n$ denote the complete graph with $n$ vertices. We will say that $(X,d)$ contains a copy of $K_n$ if there exist $r>0$ and $(x_i)_{i=1}^n\subset X$ such that $d(x_i,x_j)=r$, for every $i\neq j$. The following result is an extension of Proposition~\ref{goldenratio}, since $(X,d)$ always contains a copy of $K_2$.

\begin{theorem}
If $(X,d)$ contains a copy of $K_n$ for some $n\geq2$, then $C_{(X,d)}\geq\frac{1+\sqrt{4n-3}}{2}$.
\end{theorem}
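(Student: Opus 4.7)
The plan is to generalize the dichotomy argument from Proposition~\ref{goldenratio}, which corresponds to the case $n=2$ (note $\frac{1+\sqrt{4\cdot 2-3}}{2}=\varphi$). Let $(x_i)_{i=1}^n \subset X$ realize a copy of $K_n$, so $d(x_i,x_j)=r$ whenever $i\neq j$. The natural radii to work with are again $r/3$, $2r/3$ and $4r/3$. By the triangle inequality, the balls $B(x_i,r/3)$ are pairwise disjoint (any common point would force $d(x_i,x_j)<2r/3<r$), and the same reasoning shows that $B(x_i,2r/3)$ is disjoint from every $B(x_j,r/3)$ with $j\neq i$. Moreover, for a fixed index $i$ one has $\bigcup_{j\neq i} B(x_j,r/3) \subset B(x_i,4r/3)$ and $B(x_i,2r/3)\subset B(x_i,4r/3)$.

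Fix a parameter $\lambda>0$ to be optimized, and let $\mu$ be any doubling measure on $(X,d)$. I distinguish two cases.

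\textbf{Case 1.} There exists some $i$ with $\mu(B(x_i,2r/3)) \leq \lambda \sum_{j\neq i}\mu(B(x_j,r/3))$. Since $B(x_i,2r/3)$ and $\bigcup_{j\neq i} B(x_j,r/3)$ are disjoint subsets of $B(x_i,4r/3)$, doubling gives
$$
C_\mu\,\mu(B(x_i,2r/3)) \geq \mu(B(x_i,4r/3)) \geq \mu(B(x_i,2r/3)) + \sum_{j\neq i}\mu(B(x_j,r/3)),
$$
so $C_\mu \geq 1 + \tfrac{1}{\lambda}$.

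\textbf{Case 2.} For every $i$, $\mu(B(x_i,2r/3)) > \lambda\sum_{j\neq i}\mu(B(x_j,r/3))$. Summing over $i$ and using $\mu(B(x_i,2r/3))\leq C_\mu\mu(B(x_i,r/3))$, the right-hand side collapses to $\lambda(n-1)\sum_j \mu(B(x_j,r/3))$, yielding $C_\mu > \lambda(n-1)$.

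In either case $C_\mu \geq \min\{\lambda(n-1),\,1+1/\lambda\}$. Optimizing by equating the two expressions produces the quadratic $(n-1)\lambda^2-\lambda-1=0$, whose positive root satisfies $\lambda(n-1)=\frac{1+\sqrt{4n-3}}{2}$. Since $\mu$ was arbitrary, $C_{(X,d)}\geq\frac{1+\sqrt{4n-3}}{2}$. No serious obstacle is expected: the only nontrivial step is verifying the containment $B(x_j,r/3)\subset B(x_i,4r/3)$ and the disjointness relations among the three types of balls, both of which are immediate from the triangle inequality combined with the fact that all pairwise distances equal exactly $r$.
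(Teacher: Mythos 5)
Your proposal is correct and follows essentially the same route as the paper's proof: the same dichotomy in a parameter $\lambda$ with balls of radii $r/3$, $2r/3$, $4r/3$, summing over $i$ in the second case, and optimizing $\min\{1+1/\lambda,\lambda(n-1)\}$ to obtain $\frac{1+\sqrt{4n-3}}{2}$. The disjointness and inclusion claims you verify are exactly those used in the paper, so there is nothing to add.
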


\begin{proof}
Let $\mu$ be any doubling measure on $(X,d)$. Let $(x_i)_{i=1}^n\subset X$ be such that $d(x_i,x_j)=r$, for every $i\neq j$. Fix $\lambda>0$. Suppose first, that for some $i\in\{1,\ldots,n\}$ we have
$$
\mu(B(x_i, {2r}/{3}))<\lambda\sum_{j\neq i} \mu(B(x_j, {r}/{3})).
$$
Since for different  $j,j'\in\{1,\ldots,n\}\backslash\{i\}$ we have
\medskip

\begin{enumerate}[(i)]
\item $B(x_i, {2r}/{3})\cap B(x_j, {r}/{3})=\emptyset$,
\item $B(x_j, {r}/{3})\cap B(x_{j'}, {r}/{3})=\emptyset$,
\item $B(x_j, {r}/{3})\subset B(x_i, {4r}/{3})$,
\end{enumerate}
\medskip

\noindent
it follows that
\begin{align*}
C_\mu \mu(B(x_i, {2r}/{3}))&\geq \mu(B(x_i, {4r}/{3}))\geq  \mu(B(x_i, {2r}/{3}))+\sum_{j\neq i} \mu(B(x_j, {r}/{3}))\\
&>(1+ 1/\lambda)  \mu(B(x_i, {2r}/{3})).
\end{align*}
Thus, in this case we have $C_\mu>1+ {1}/{\lambda}$.

Now, suppose that for every $i\in\{1,\ldots,n\}$ we have
$$
\mu(B(x_i, {2r}/{3}))\geq\lambda\sum_{j\neq i} \mu(B(x_j, {r}/{3})).
$$
Taking the sum over all $i\in\{1,\ldots,n\}$, we get
$$
C_\mu\sum_{i=1}^n \mu(B(x_i, {r}/{3}))\geq \sum_{i=1}^n \mu(B(x_i, {2r}/{3})) \geq\sum_{i=1}^n\lambda\sum_{j\neq i} \mu(B(x_j, {r}/{3})).
$$
Thus, in this case it follows that $C_\mu\geq\lambda(n-1)$. Hence, we have proved that 
$$
C_\mu\geq \sup_{\lambda>0}\min\{1+1/\lambda,\lambda(n-1)\}.
$$
Optimizing in $\lambda>0$, the conclusion follows.
\end{proof}

\section{Examples}\label{sec5}

In this Section we are going to find some explicit values of the constants $C_{(X,d)}$ for a wide range of metric spaces $(X,d)$.  We start with the case of the finite dimensional real spaces with any of the equivalent $\ell^p$ metrics, $1\le p\le\infty$.
\begin{proposition}\label{crn} For very $n\in\mathbb N$ and $1\le p\le\infty$, we have that
$C_{(\mathbb R^n,\|\cdot\|_p)}=2^n$.
\end{proposition}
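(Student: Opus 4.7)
The proof will combine a direct upper bound via Lebesgue measure with a lower bound obtained by feeding a cubic grid configuration into Theorem~\ref{maxmin}. I expect the upper bound to be immediate and the lower bound to require only a careful bookkeeping of constants.

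\medskip

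\textbf{Upper bound.} The plan is to exhibit an explicit doubling measure. Let $\lambda_n$ denote $n$-dimensional Lebesgue measure. For every $1\le p\le \infty$, the unit $\ell^p$-ball $B_p(0,1)$ is bounded, so $\lambda_n(B_p(0,1))\in(0,\infty)$. Since each $\ell^p$-ball is a translate and dilate of the unit ball, $\lambda_n(B_p(x,r))=r^n\lambda_n(B_p(0,1))$. Consequently,
$$
\frac{\lambda_n(B_p(x,2r))}{\lambda_n(B_p(x,r))}=2^n
$$
for every $x\in\mathbb{R}^n$ and $r>0$, so $C_{\lambda_n}=2^n$ and therefore $C_{(\mathbb{R}^n,\|\cdot\|_p)}\le 2^n$.

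\medskip

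\textbf{Lower bound.} The plan is to produce, for arbitrarily large $N\in\mathbb{N}$, a set of $N$ points whose ratio of maximum to minimum pairwise distance grows like $N^{1/n}$, and then to invoke Theorem~\ref{maxmin}. For each integer $N\ge 1$, put $k=\lceil N^{1/n}\rceil$ and select $N$ distinct points $(x_i)_{i=1}^N$ from the cubic grid $\{0,1,\dots,k-1\}^n\subset\mathbb{R}^n$ (which has $k^n\ge N$ elements). Distinct grid points differ by at least $1$ in some coordinate, so $\min_{i\ne j}\|x_i-x_j\|_p\ge 1$. On the other hand, the $\ell^p$-diameter of the grid is at most $(k-1)n^{1/p}$ (using the convention $n^{1/\infty}=1$), and hence
$$
\max_{i,j}\|x_i-x_j\|_p\;\le\;(k-1)n^{1/p}\;\le\;(N^{1/n}+1)n^{1/p}\;\le\;2n^{1/p}N^{1/n}\min_{i\ne j}\|x_i-x_j\|_p.
$$

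\medskip

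Now set $\varepsilon_0=1$, $\varphi(\varepsilon)=\frac{1}{n}+\varepsilon$, and $K=2n^{1/p}$. Then $\varphi:[0,1)\to\mathbb{R}_+$ is continuous and strictly increasing, and since $N^{1/n}\le N^{\varphi(\varepsilon)}$ for every $\varepsilon>0$ and $N\ge 1$, the configurations above satisfy the hypothesis of Theorem~\ref{maxmin}. The conclusion of that theorem gives
$$
C_{(\mathbb{R}^n,\|\cdot\|_p)}\;\ge\;2^{1/\varphi(0)}\;=\;2^n,
$$
closing the gap with the upper bound.

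\medskip

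\textbf{Potential obstacle.} The only non-routine point is to make the grid construction mesh with the hypothesis of Theorem~\ref{maxmin}, which requires an increasing continuous function $\varphi$ rather than the natural constant value $\varphi\equiv 1/n$. The simple perturbation $\varphi(\varepsilon)=1/n+\varepsilon$ handles this, and the bound $2^{1/\varphi(0)}=2^n$ is exactly what is needed; all other estimates are elementary.
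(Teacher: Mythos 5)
Your proof is correct. The upper bound via Lebesgue measure is exactly what the paper does. For the lower bound, however, you take a genuinely different route: you feed a cubic grid of $N$ points, with separation at least $1$ and diameter at most $2n^{1/p}N^{1/n}$, into Theorem~\ref{maxmin} with $\varphi(\varepsilon)=\tfrac1n+\varepsilon$, whereas the paper argues directly from Proposition~\ref{p:lowerboundsr}: it packs $k^n$ pairwise disjoint $\ell^p$-balls of radius $c_p/k$ inside $B_p(0,1)$ and obtains $C_\mu\geq k^{n/(1+\log_2(2k/c_p))}\to 2^n$ as $k\to\infty$. The two arguments rest on the same packing idea (and indeed Theorem~\ref{maxmin} is itself proved by the same disjoint-ball count plus Lemma~\ref{lrs}), so what your route buys is that the asymptotic bookkeeping in $k$ is already absorbed into the statement of Theorem~\ref{maxmin}, making the example a one-line application; what the paper's route buys is independence from Section~4, using only the elementary Proposition~\ref{p:lowerboundsr} from the preliminaries. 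Two small remarks: your perturbation $\varphi(\varepsilon)=\tfrac1n+\varepsilon$ is a harmless but unnecessary precaution, since the paper itself applies Theorem~\ref{maxmin} with a constant $\varphi$ (see the remark following it, where $\varphi\equiv 1$); and your separation estimate is justified because $\|x\|_p\geq\|x\|_\infty\geq 1$ for distinct grid points, which you implicitly use and could state explicitly.
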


\begin{proof}
If $\lambda$ denotes the Lebesgue measure we clearly have $C_{(\mathbb R^n,\|\cdot\|_p)}\leq C_\lambda=2^n$. Conversely, given $1\le p\le \infty$, it is an easy geometric fact to observe that there exists a constant $c_p>0$ such that, for every $k\in\mathbb N$, we can find points $x_{j,k}\in\mathbb R^n$, $j=1,\dots,k^n$ satisfying that the collection of balls $\big\{B_p(x_{j,k},c_p/k)\big\}_{j=1,\dots,k^n}$ are pairwise disjoint and $B_p(x_{j,k},c_p/k)\subset B_p(0,1)$, where $B_p$ is a ball with respect to the metric $\|\cdot\|_p$.

\medskip

Hence, Proposition \ref{p:lowerboundsr} yields that, for every doubling measure $\mu$ on $(\mathbb R^n,\|\cdot\|_p)$, it holds that
$$
C_\mu\geq k^{ \frac{n}{1+\log_2(2k/c_p)}}\underset{k\to\infty}{\longrightarrow}2^n.
$$
\end{proof}

We consider now the case of a simple and connected graph $G$, as a metric space endowed with the shortest path distance $d$. We will use the standard notation on $G$: for finite graphs,  $n$ is the number of vertices $V(G)$ and $m$ the cardinality of its edges $E(G)$; for a vertex $v\in V(G)$, $d(v)$ is the degree (the number of neighbors or, equivalently, the cardinality of the sphere $S(v,1)=\{u\in V(G):d(v,u)=1\}$); $\Delta$ is the maximum degree of $G$. For short, we will denote $C_G=C_{(G,d)}$ for the least doubling constant with $G$ equipped with the shortest path distance $d$.

In general, it is not true that the cardinality measure is always doubling on $G$. A necessary (but not sufficient) condition for this to happen is that $\Delta<\infty$. Moreover, there are  (infinite) graphs $G$ where no doubling measure exists (i.e., $C_G=\infty$), even though $G$ is always a complete metric space. For example, it is easy to see that this is the case for the $k$-homogeneous tree $T_k$, $k\ge3$, since it is not doubling in the metric sense \cite{SorTra}.

If $G$ is finite, then for the cardinality measure $\lambda$   we have that, for every $x\in V(G)$ and $r>0$
$$
\frac{\lambda(B(x,2r))}{\lambda(B(x,r))}\le\frac n1.
$$
This inequality, together with Proposition~\ref{p:isolated}, gives us that if $n=|V(G)|\ge2$, then $2\le C_G\le n$. It goes without saying that on finite graphs, all measures are doubling (always assuming the restriction that balls should have positive measure).

\begin{proposition}\label{comgra}
Let $K_n$ denote the complete graph with $n$ vertices. Then $C_{K_n}=n$.
\end{proposition}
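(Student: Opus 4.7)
The plan is to compute $C_\mu$ explicitly for an arbitrary doubling measure $\mu$ on $K_n$ and then minimize over $\mu$. The decisive feature of $K_n$ is that any two distinct vertices are at distance exactly $1$, so the family of balls is extremely simple: for every vertex $x$ and every $r>0$,
$$
B(x,r)=\begin{cases}\{x\} & \text{if } 0<r\leq 1,\\ V(K_n) & \text{if } r>1.\end{cases}
$$
I would first record this, and then write down the ratio $\mu(B(x,2r))/\mu(B(x,r))$ as a function of $r$. The ratio equals $1$ whenever $r\leq 1/2$ (both balls are $\{x\}$) and whenever $r>1$ (both balls are $V(K_n)$); the only non-trivial range is $1/2<r\leq 1$, in which case $B(x,r)=\{x\}$ while $B(x,2r)=V(K_n)$, so the ratio equals $\mu(V(K_n))/\mu(\{x\})$.

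Taking the supremum over $x$ and $r$ therefore yields
$$
C_\mu=\max_{x\in V(K_n)}\frac{\mu(V(K_n))}{\mu(\{x\})}=\frac{\mu(V(K_n))}{\min_{x\in V(K_n)}\mu(\{x\})}.
$$
Since $\mu(V(K_n))=\sum_{x\in V(K_n)}\mu(\{x\})\geq n\cdot\min_{x}\mu(\{x\})$, this gives $C_\mu\geq n$ for every doubling $\mu$, with equality precisely when $\mu$ assigns the same mass to each vertex. The converse bound then follows by exhibiting the uniform (cardinality) measure, which attains $C_\mu=n$.

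There is no real obstacle here: once the very simple ball structure of $K_n$ is spelled out, the rest is an elementary optimization. The only delicate bookkeeping is to be careful about strict versus non-strict inequalities in the definition $B(x,r)=\{y:d(x,y)<r\}$, which is exactly what makes the ratio jump at $r=1/2$ (where $2r=1$ still gives $B(x,2r)=\{x\}$) rather than at $r=1$.
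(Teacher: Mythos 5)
Your proof is correct and follows essentially the same route as the paper: the lower bound comes from comparing $\mu(V(K_n))$ with the mass of a minimal-measure vertex (the paper simply fixes the radii $3/4$ and $3/2$ instead of computing the ratio for all $r$), and the upper bound comes from the uniform measure. Your explicit formula $C_\mu=\mu(V(K_n))/\min_x\mu(\{x\})$ is a slightly more detailed version of the same argument, and your bookkeeping with the open balls at $r=1/2$ and $r=1$ is accurate.
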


\begin{proof}
Let $V=\{x_j:1\leq j\leq n\}$ denote the set of vertices of $K_n$. Given a measure $\mu$ on $K_n$ (which is trivially doubling), let $a_j=\mu(\{x_j\}).$ Since $\mu(\{x_j\})=\mu(B(x_j,r))$, for any $0<r\leq1$, we have that $a_j>0$, for every $1\leq j\leq n$. Let $a_k=\min\{a_j:1\leq j\leq n\}$. Then
$$
C_\mu\geq\frac{\mu(B(x_k, {3}/{2}))}{\mu(B(x_k, {3}/{4}))}=\frac{\sum_{j=1}^n a_j}{a_k}\geq n.
$$

Thus,  $n \ge C_{K_n}=\inf{C_\mu}\ge n$.
\end{proof}

\begin{proposition}
Let $S_n$ denote the star graph with $n$ vertices; that is, one vertex of degree $n-1$ and $n-1$ vertices of degree $1$.  Then $C_{S_n}=1+\sqrt{n-1}$.
\end{proposition}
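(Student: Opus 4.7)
The plan is to parametrize every (automatically doubling) measure on $S_n$ by the weights $a=\mu(\{c\})$ and $b_i=\mu(\{\ell_i\})$, where $c$ is the central vertex of degree $n-1$ and $\ell_1,\ldots,\ell_{n-1}$ are the leaves; then to extract two decisive ball ratios and balance them. First I would list all the balls: $B(c,r)=\{c\}$ for $0<r\le 1$ and $B(c,r)=V(S_n)$ for $r>1$, while $B(\ell_i,r)=\{\ell_i\}$ for $0<r\le 1$, $B(\ell_i,r)=\{\ell_i,c\}$ for $1<r\le 2$, and $B(\ell_i,r)=V(S_n)$ for $r>2$. This gives only three nontrivial quotients in the definition of $C_\mu$, namely
$$
1+\frac{\sum_j b_j}{a},\qquad 1+\frac{a}{b_i},\qquad \frac{a+\sum_j b_j}{a+b_i}=1+\frac{\sum_{j\ne i}b_j}{a+b_i},
$$
the last one being easily seen to be dominated by the first (a one-line inequality: multiplying out $(n-2)b_i\cdot a \le (n-1)b_i(a+b_i)$ reduces to $0\le a+(n-1)b_i$).

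For the lower bound, set $b_{\min}=\min_i b_i$, so that $\sum_j b_j\ge (n-1)b_{\min}$. Then any doubling measure $\mu$ satisfies
$$
C_\mu \ge \max\Bigl\{1+\tfrac{(n-1)b_{\min}}{a},\,1+\tfrac{a}{b_{\min}}\Bigr\}.
$$
Since the product of $(n-1)b_{\min}/a$ and $a/b_{\min}$ is $n-1$, by AM--GM the maximum of the two is at least $\sqrt{n-1}$, giving $C_\mu\ge 1+\sqrt{n-1}$ and hence $C_{S_n}\ge 1+\sqrt{n-1}$.

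For the matching upper bound, I would exhibit the symmetric measure $\mu(\{c\})=\sqrt{n-1}$, $\mu(\{\ell_i\})=1$ for every $i$. A direct computation of the three ratios above then gives $1+(n-1)/\sqrt{n-1}=1+\sqrt{n-1}$, $1+\sqrt{n-1}$, and $\bigl(\sqrt{n-1}+(n-1)\bigr)/\bigl(\sqrt{n-1}+1\bigr)=\sqrt{n-1}$, so that $C_\mu=1+\sqrt{n-1}$. Combining both bounds yields $C_{S_n}=1+\sqrt{n-1}$.

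The only delicate bookkeeping step is verifying that the ``middle radius at a leaf'' ratio $(a+\sum_j b_j)/(a+b_i)$ never forces a larger constant than the two balanced ones; once that is observed the problem reduces to a two-variable AM--GM optimization, and there is no real obstacle beyond correctly enumerating the ball radii.
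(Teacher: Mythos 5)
Your proposal is correct and follows essentially the same route as the paper: parametrize the measure by the point masses, enumerate the ball radii to reduce $C_\mu$ to the three quotients, observe the ``middle radius at a leaf'' quotient is dominated (same numerator, larger denominator --- your parenthetical cross-multiplication is garbled but the claim is immediate), and use the weight $\sqrt{n-1}$ at the center, $1$ at each leaf, for the upper bound. The only difference is cosmetic: your observation that $\frac{(n-1)b_{\min}}{a}\cdot\frac{a}{b_{\min}}=n-1$, so the larger of the two decisive ratios is at least $1+\sqrt{n-1}$, settles the lower bound a bit more cleanly than the paper's case split and explicit two-variable infimum.
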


\begin{proof}
Let $V=\{x_j:1\leq j\leq n\}$ denote the set of vertices of $S_n$, with $x_1$ being the vertex of degree $n-1$. Given a doubling measure $\mu$ on $S_n$, let $a_j=\mu(\{x_j\})>0$ and $a=\sum_{j=1}^n a_j$. We have that
$$
\mu(B(x_1,r))=\left\{
\begin{array}{lll}
a_1,  &   &  r\leq1, \\
a,  &   &  r>1,
\end{array}
\right.\quad\text{and}\quad
\mu(B(x_j,r))=\left\{
\begin{array}{lll}
a_j,  &   &  r\leq1, \\
a_j+a_1,  &   & 1<r\le 2,\\
a, & & r>2,
\end{array}
\right.
$$
for $j\neq1$. Therefore,  
$$
\sup_{r>0}\frac{\mu(B(x_1,2r))}{\mu(B(x_1,r))}=\max\Big\{1,\frac{a}{a_1}\Big\}=\frac{a}{a_1},
$$
while for $j\neq1$ we get
$$
\sup_{r>0}\frac{\mu(B(x_j,2r))}{\mu(B(x_j,r))}=\max\Big\{1,\frac{a_j+ a_1}{a_j},\frac{a}{a_j+a_1}\Big\}.
$$
Since $\frac{a}{a_j+a_1}<\frac{a}{a_1}$, we have that
$$
C_\mu=\max\Big\{\frac{a}{a_1},\frac{a_j+ a_1}{a_j}\Big\}.
$$
Now, if $a_1=\min\{a_j:1\leq j\leq n\}$, then
\begin{equation*}
C_\mu=\frac{a}{a_1}\geq n.
\end{equation*}
Otherwise, suppose $a_{j_0}=\min\{a_j:1\leq j\leq n\}<a_1$. Then we want to compute
$$
A=\inf\Big\{\max\Big\{\frac{a}{a_1},\frac{a_{j_0}+ a_1}{a_{j_0}}\Big\}:0<a_{j_0}<a_1\Big\}.
$$
If we set $r=\frac{a_1}{a_{j_0}}$ and $s=\frac{\sum_{j\neq1,j_0} a_j}{a_{j_0}}$, then we get
$$
A\geq\inf\Big\{\max\Big\{\frac{s+1}{r}+1,r+1\Big\}:r>1,s\geq n-2\Big\}.
$$
Note that $\frac{s+1}{r}+1>r+1$ if and only if $r<\sqrt{s+1}$. Thus,
\begin{align*}
A&\geq\min\Big\{\inf\Big\{\frac{s+1}{r}+1:1<r<\sqrt{s+1},s\geq n-2\Big\},\\
&\qquad\qquad\inf\big\{r+1:r>\sqrt{s+1},s\geq n-2\big\}\Big\}\\
&\geq 1+\sqrt{n-1}.
\end{align*}
Hence, we get
$
C_{S_n}\geq 1+\sqrt{n-1}.
$
\medskip

Conversely, if we consider in $V$ the measure $\mu$ given by $\mu(\{x_1\})=\sqrt{n-1}$ and $\mu(\{x_j\})=~1$, for $j\neq 1$,  we finally get
$$
C_{S_n}\leq C_\mu=\max\Big\{\frac{n-1+\sqrt{n-1}}{\sqrt{n-1}},\frac{\sqrt{n-1}+1}{1}\Big\}=\sqrt{n-1}+1.
$$
This finishes the proof.
\end{proof}

\begin{proposition}
For $n\geq3$, let $C_n$ denote the $n$-cycle graph; that is, a connected graph of $n$ vertices all of them with degree 2. Then, $C_{C_n}=3$.
\end{proposition}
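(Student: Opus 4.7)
The plan is to prove matching upper and lower bounds. For the upper bound, I will exhibit the cardinality (counting) measure $\lambda$, which assigns weight $1$ to each vertex, and verify that $C_\lambda \leq 3$. For the lower bound, I will use the vertex of minimum $\mu$-weight and the radius $r=1$.

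For the upper bound, distances in $C_n$ take integer values in $\{0,1,\ldots,\lfloor n/2\rfloor\}$, so balls change only at integer radii. It is convenient to partition $(0,\infty)$ into intervals $(k,k+\tfrac12]$ and $(k+\tfrac12,k+1]$ for $k\in\mathbb Z_{\geq 0}$, noting that on each such interval both $B(x,r)$ and $B(x,2r)$ are constant. Provided the outer ball does not wrap around the cycle, one finds $|B(x,r)|=2k+1$ and $|B(x,2r)|=4k+1$ on $(k,k+\tfrac12]$, giving ratio $(4k+1)/(2k+1)<2$; and $|B(x,r)|=2k+1$ and $|B(x,2r)|=4k+3$ on $(k+\tfrac12,k+1]$, giving ratio $(4k+3)/(2k+1)$, which equals $3$ at $k=0$ and is bounded by $7/3$ for $k\geq 1$. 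When the outer ball equals the whole vertex set $V$, the ratio becomes $n/|B(x,r)|$, and a short check (using that wrapping requires $2r>\lfloor n/2\rfloor$, hence $|B(x,r)|>n/2$) shows this ratio is strictly less than $2$. Hence $C_\lambda\leq 3$, and in fact equality is attained at $r\in(1/2,1]$.

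For the lower bound, let $\mu$ be any doubling measure on $C_n$. Since $V(C_n)$ is finite and every singleton has positive measure, we may pick a vertex $x^\ast\in V(C_n)$ that minimizes $\mu(\{v\})$. Because $n\geq 3$, the two neighbors $y,z$ of $x^\ast$ are distinct, and in the shortest-path metric we have $B(x^\ast,1)=\{x^\ast\}$ and $B(x^\ast,2)=\{x^\ast,y,z\}$. Therefore
$$
C_\mu\;\geq\;\frac{\mu(B(x^\ast,2))}{\mu(B(x^\ast,1))}\;=\;1+\frac{\mu(\{y\})+\mu(\{z\})}{\mu(\{x^\ast\})}\;\geq\;1+2\;=\;3,
$$
where the final inequality is the minimality of $\mu(\{x^\ast\})$. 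Taking the infimum over doubling $\mu$ yields $C_{C_n}\geq 3$.

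The only potentially delicate step is the case analysis in the upper bound when wrapping begins: one must verify that the transitional ratios $n/|B(x,r)|$ never exceed $3$. This is purely combinatorial and follows from the fact that whenever $B(x,2r)=V$, the inner ball $B(x,r)$ already contains strictly more than $n/2$ vertices, so the ratio is strictly less than $2$. Combining the two bounds gives $C_{C_n}=3$.
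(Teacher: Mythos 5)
Your lower bound is exactly the paper's argument (evaluate at the vertex of minimal weight with radii $1$ and $2$), and your upper bound follows the same route as the paper's (counting measure plus a case analysis over radii), but the wrap-around case of your upper bound is justified by a false claim. You assert that $B(x,2r)=V$ forces $2r>\lfloor n/2\rfloor$ and hence $|B(x,r)|>n/2$, so that the ratio is strictly below $2$. The first implication is fine, but the second is not: from $r>\lfloor n/2\rfloor/2$ you only get $|B(x,r)|=2\lceil r\rceil-1$ with $\lceil r\rceil-1$ possibly about $n/4$, not $n/2$. Concretely, in $C_7$ take $r=8/5$: then $2r=16/5$ exceeds the diameter $3$, so $B(x,2r)=V$ has $7$ points, while $B(x,r)=\{y:d(x,y)\le 1\}$ has $3$ points, giving ratio $7/3>2$; in $C_6$ the same $r$ gives ratio exactly $2$; and in $C_3$ any $r\in(1/2,1]$ has $B(x,2r)=V$ and ratio $3/1=3$, so ``strictly less than $2$'' fails (indeed the extremal value $3$ is attained precisely in a wrap-around situation when $n=3$). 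So as written, the step you yourself flagged as the delicate one does not establish the bound you need in the capped case.

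The gap is easily repaired within your scheme, because all you need there is $\le 3$, not $<2$: if $B(x,2r)=V$ and $B(x,r)=\{y:d(x,y)\le k\}$ with $k=\lceil r\rceil-1$ (if the inner ball is also all of $V$ the ratio is $1$), then $n\le 2\lceil 2r\rceil-1\le 4\lceil r\rceil-1=4k+3$, hence $n/|B(x,r)|\le(4k+3)/(2k+1)\le 3$. With that correction your proof is complete and coincides in substance with the paper's, which sidesteps the issue differently: it reduces to integer radii with $r\le (n+1)/4$, where the relevant ratio is $(4r-1)/(2r-1)\le 3$ and the outer ball never needs separate wrap-around treatment.
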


\begin{proof}
Let $V=\{x_j:1\leq j\leq n\}$ denote the set of ordered vertices of $C_n$. Given any   measure $\mu$ on $C_n$, let $a_j=\mu(\{x_j\})>0$. Let $1\le j_0\leq n$ such that $a_{j_0}=\min\{ a_j: 1\leq j\leq n\}$. Hence, we have that
$$
C_\mu\geq \frac{\mu(B(x_{j_0},2))}{\mu(B(x_{j_0},1))}=\frac{a_{j_0-1}+a_{j_0}+a_{j_0+1}}{a_{j_0}}\geq 3,
$$
(we understand that $j_0-1=n$, if $j_0=1$, and $j_0+1=1$, if $j_0=n$). Since this holds for any (doubling) measure in $C_n$, it follows that $C_{C_n}\geq 3$.

For the converse, let $\mu_{\#}$ be the counting the measure in $C_n$; that is,  $\mu_{\#}(\{x_j\})=1$, for $1\leq j\leq n$. We first observe that, on any graph, $B(x,r)=B(x,\lceil r\rceil)$ and hence we only need to consider values of $r>0$ for which $r\in\mathbb N$ or $2r\in\mathbb N$. Moreover, since $\mu_{\#}(B(x,r))=\min\{2r-1,n\}$, $r\in\mathbb N$, then we can easily restrict the radius to the range  $1/2\le r\le (n+1)/4$. The important remark for $C_n$ is that if $2r\in\mathbb N$, but $r\notin\mathbb N$ (e.g., $r=1/2,3/2,5/2,...$), then $B(x,r)=B(x,r+1/2)$ and hence we obtain
$$
C_{\mu_{\#}}\ge\frac{\mu_{\#}(B(x,2r+1))}{\mu_{\#}(B(x,r+1/2))}\ge\frac{\mu_{\#}(B(x,2r))}{\mu_{\#}(B(x,r))},
$$
showing that we can further reduce the radius of the balls  to the simpler condition $r\in\{1,2,\dots,\lfloor (n+1)/4\rfloor\}$. Finally, for those values of $r$:
$$
\frac{\mu_{\#}(B(x,2r))}{\mu_{\#}(B(x,r))}=\frac{4r-1}{2r-1}\le 3.
$$
Therefore, $C_{\mu_{\#}}=3$ and this finishes the proof.
\end{proof}

\end{document}